\newtheorem{theorem}{Theorem}[section]
\newtheorem{lemma}[theorem]{Lemma}
\newtheorem{corollary}[theorem]{Corollary}
\theoremstyle{definition}
\newtheorem{definition}[theorem]{Definition}
\newtheorem{example}[theorem]{Example}
\theoremstyle{remark}
\newtheorem{remark}[theorem]{Remark}
\numberwithin{equation}{section}
\begin{document}

\setcounter{page}{1}

\title[MPP on QPMs]{Mix-point property in quasi-pseudometric spaces. }

\author[Ya\'e Olatoundji Gaba]{Ya\'e Olatoundji Gaba$^{1,*}$}

\address{$^{1}$Department of Mathematics and Applied Mathematics, University of Cape Town, South Africa.}
\email{\textcolor[rgb]{0.00,0.00,0.84}{gabayae2@gmail.com
}}

\subjclass[2010]{Primary 47H05; Secondary 47H09, 47H10.}

\keywords{quasi-pseudometric; bi-completeness; startpoint; endpoint; approximate startpoint property; approximate endpoint property; approximate mix-point property, fixed point. }


\begin{abstract}

In this article, we give new results in the startpoint theory for quasi-pseudometric spaces.
The results we present provide us with the existence of startpoint (endpoint, fixed point) for multi-valued maps defined on a bicomplete quasi-pseudometric space. We characterise the existence of startpoint and endpoint by the so-called \textit{mix-point property}. The present results extend known ones in the area.

\end{abstract} 

\maketitle

\section{Introduction and preliminaries}
The theory of startpoint, first introduced in \cite{rico}, came to extend the idea of fixed points for multi-valued mappings defined on quasi-pseudometric spaces. A series of three papers, see \cite{rico, rico1, ricoo} has given a more or less detailed introduction to the subject. The aim of the present article is to continue this study by introducing the idea of \textit{mix-point property}, which is used to characterise the existence of startpoints.

\begin{definition}
Let $X$ be a non empty set. A function $d:X \times X \to [0,\infty)$ is called a \textbf{quasi-pseudometric} on $X$ if:
\begin{enumerate}
\item[i)] $d(x,x)=0 \quad \forall \ x \in X$, 
\item[ii)] $d(x,z) \leq d(x,y) + d(y,z) \quad \forall\  x,y,z \in X $. 
\end{enumerate}

Moreover, if

\begin{enumerate}
\item[iii)]  $d(x,y)=0=d(y,x) \Longrightarrow x=y$, then $d$ is said to be a \textbf{$T_0$-quasi-metric}.
\end{enumerate}
 The latter condition is referred to as the $T_0$-condition.
\end{definition}

\begin{remark} \hspace*{0.5cm}  
\begin{itemize}
\item Let $d$ be a quasi-pseudometric on $X$, then the function $d^{-1}$ defined by $d^{-1}(x,y)=d(y,x)$ whenever $x,y \in X$ is also a quasi-pseudometric on $X$, called the \textbf{conjugate} of $d$. In the literature, $d^{-1}$ is also denoted $d^t$ or $\bar{d}$.
\item It is easy to verify that the function $d^s$ defined by $d^s:=d\vee d^{-1}$, i.e. $d^s(x,y)=\max \{d(x,y),d(y,x)\}$ defines a metric on $X$ whenever $d$ is a $T_0$-quasi-metric on $X$.
\end{itemize}

\end{remark}

\begin{definition}\cite{rico}
 A $T_0$-quasi-metric space $(X,d)$ is called \textbf{bicomplete} provided that the metric $d^s$ on $X$ is complete.
\end{definition}

Let $(X,d)$ be a quasi-pseudometric space. We set $\mathscr{P}_0(X):=2^X \setminus \{ \emptyset\}$ where $2^X$ denotes the power set of $X$. For $x\in X$ and $A \in \mathscr{P}_0(X)$, we define:

$$ d(x,A)= \inf\{ d(x,a),a\in A\} , \quad  d(A,x)= \inf\{ d(a,x),a\in A\}.$$

We also define the map $H:\mathscr{P}_0(X) \times \mathscr{P}_0(X) \to [0,\infty]$ by 

$$H(A,B)= \max \left\lbrace \underset{a\in A}{\sup}\ d(a,B), \ \underset{b\in B}{\sup} \ d(A,b)   \right\rbrace \text{ whenever } A,B, \in \mathscr{P}_0(X).$$

Then $H$ is an extended\footnote{This means that $H$ can attain the value $\infty$ as it appears in the definition.} quasi-pseudometric on $\mathscr{P}_0(X)$.

\section{Some first results}
We briefly recall the idea of a startpoint, as initially intended in \cite{rico}.

\begin{definition}(Compare \cite{rico})
Let $(X,d)$ be a $T_0$-quasi-metric space.

Let $F:X\to 2^X$ be a set-valued map. An element $x\in X$ is said to be 
\begin{enumerate}
\item[(i)] a fixed point of $F$ if $x\in Fx$,
\item[(ii)] a startpoint of $F$ if $H(\{x\},Fx)=0$,
\item[(iii)] an endpoint of $F$ if $H(Fx,\{x\})=0$.
\item[(iv)]  an $\varepsilon$-startpoint of $F$ for some $\varepsilon\in (0,1)$ if  $H(\{x\},Fx)< \varepsilon$.
\item[(v)]  an $\varepsilon$-endpoint of $F$ for some $\varepsilon\in (0,1)$ if  $H(Fx,\{x\})< \varepsilon$.
\end{enumerate}
\end{definition}

\begin{remark}( See \cite{rico})
It is therefore obvious that if $x$ is both a startpoint of $F$ and an endpoint of $F$, then $x$ is a fixed point of $F$. In fact, $Fx$ is a singleton. Observe that a fixed point need not to be a startpoint nor an endpoint. 

Indeed, consider the $T_0$-quasi-metric space $(X,d)$ where $X=\{0,1\}$ and $d$ defined by $d(0,1)=0, \ d(1,0)=1$ and $d(x,x)=0$ for $x=0,1$. We define on $X$ the set-valued map $F:X\to 2^X$ by $Fx=X$. Obviously, $1$ is a fixed point, but $H(\{1\},F1)=H(\{1\},X)=\max\{d(1,1),d(1,0)\}=1\neq 0$.
\end{remark}

\begin{lemma}\cite{rico}
Let $(X,d)$ be a $T_0$-quasi-metric space and $F:X\to 2^X$ be a set-valued map. An element $x\in X$ is a startpoint of $F$ if and only if it is an $\varepsilon$-startpoint of $F$ for every $\varepsilon\in (0,1)$. 
\end{lemma}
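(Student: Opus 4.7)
The statement reduces to a pure real-analysis triviality once we unwind the definitions, so I would keep the argument short and transparent rather than dressing it up.

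The plan is to read both conditions as statements about the single non-negative extended real number $\alpha := H(\{x\},Fx) \in [0,\infty]$. Being a startpoint means $\alpha = 0$, while being an $\varepsilon$-startpoint means $\alpha < \varepsilon$. The equivalence then amounts to the elementary fact that, for $\alpha \in [0,\infty]$,
\[
\alpha = 0 \iff \alpha < \varepsilon \text{ for every } \varepsilon \in (0,1).
\]

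For the forward direction, if $x$ is a startpoint then $\alpha = 0 < \varepsilon$ for any $\varepsilon \in (0,1)$, so $x$ is automatically an $\varepsilon$-startpoint for every such $\varepsilon$. For the converse, assume $\alpha < \varepsilon$ holds for every $\varepsilon \in (0,1)$. Since $\alpha \geq 0$ (the map $H$ is non-negative on $\mathscr{P}_0(X)\times\mathscr{P}_0(X)$), and since the infimum of $(0,1)$ is $0$, we get $\alpha \leq \inf_{\varepsilon \in (0,1)} \varepsilon = 0$, forcing $\alpha = 0$. Equivalently, one can argue by contradiction: if $\alpha > 0$, then picking any $\varepsilon_0 \in (0,\min\{\alpha,1\})$ gives $\alpha \geq \varepsilon_0$, contradicting the hypothesis.

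There is no real obstacle here; the only tiny subtlety worth flagging is that $H$ is extended-valued, so one should briefly mention that the hypothesis $\alpha < \varepsilon < 1$ for some $\varepsilon$ already rules out $\alpha = \infty$, after which the argument proceeds on finite non-negative reals. No completeness, no $T_0$-axiom, and no structural properties of $F$ are needed; the lemma is really a remark about the standard threshold characterization of zero.
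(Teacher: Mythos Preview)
Your argument is correct and is exactly the obvious unwinding of definitions one would expect; the paper itself does not supply a proof of this lemma but merely cites it from \cite{rico}, so there is nothing to compare against beyond noting that your reasoning is the natural one. The remark about $H$ being extended-valued is a nice touch, though in practice once $\alpha<\varepsilon$ for a single $\varepsilon\in(0,1)$ the issue vanishes.
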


\begin{lemma}\cite{rico}
Let $(X,d)$ be a $T_0$-quasi-metric space and $F:X\to 2^X$ be a set-valued map. An element $x\in X$ is an endpoint of $F$ if and only if it is an $\varepsilon$-endpoint of $F$ for every $\varepsilon\in (0,1)$. 
\end{lemma}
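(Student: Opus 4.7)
The plan is to argue both implications directly from the definitions, exactly parallel to the analogous startpoint lemma stated just above. Set $\eta := H(Fx,\{x\}) \in [0,\infty]$; the entire content of the lemma is the equivalence between $\eta = 0$ and $\eta < \varepsilon$ for every $\varepsilon \in (0,1)$.

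For the forward implication, assume $x$ is an endpoint, so $\eta = 0$. Then for any $\varepsilon \in (0,1)$ we have $\eta = 0 < \varepsilon$, which by the definition of an $\varepsilon$-endpoint says exactly that $x$ is an $\varepsilon$-endpoint of $F$. This direction is a one-line observation.

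For the converse, suppose $x$ is an $\varepsilon$-endpoint for every $\varepsilon \in (0,1)$, i.e.\ $\eta < \varepsilon$ for all such $\varepsilon$. Taking the infimum of the right-hand side over $\varepsilon \in (0,1)$ (equivalently, letting $\varepsilon \to 0^{+}$) yields $\eta \leq 0$. Since $H$ takes values in $[0,\infty]$, in particular $\eta \geq 0$, so $\eta = 0$, meaning $x$ is an endpoint of $F$. Note that this step implicitly uses that $\eta < \varepsilon < \infty$ for some $\varepsilon \in (0,1)$, which rules out the $\infty$ value and lets us work with a finite nonnegative real number.

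There is no real obstacle here: the argument is purely a quantifier manipulation on the scalar $H(Fx,\{x\})$, and the asymmetry between startpoint and endpoint conditions plays no role whatsoever in the reasoning. The proof is essentially a verbatim copy of the preceding lemma with $H(\{x\},Fx)$ replaced by $H(Fx,\{x\})$.
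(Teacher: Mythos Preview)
Your argument is correct: the equivalence reduces to the trivial real-analysis fact that a nonnegative extended real $\eta$ equals $0$ iff $\eta<\varepsilon$ for all $\varepsilon\in(0,1)$, and you have spelled this out cleanly. The paper does not actually prove this lemma---it is quoted from \cite{rico} without proof---so there is no alternative approach to compare against; your direct unwinding of the definitions is the natural (and essentially only) way to do it.
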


\begin{definition}\cite{rico}
Let $(X,d)$ be a $T_0$-quasi-metric space. We say that a set-valued map $F:X\to 2^X$ has the {\bf approximate startpoint property} (resp. {\bf approximate endpoint property} ), if
$$\underset{x\in X}{\inf}\underset{y\in Fx}{\sup} d(x,y)=0\ \ (\text{resp.}\  \underset{x\in X}{\inf}\underset{y\in Fx}{\sup} d(y,x)=0).$$
\end{definition}

\begin{definition}\cite{rico}
Let $(X,d)$ be a $T_0$-quasi-metric space. We say that a set-valued map $F:X\to 2^X$ has the {\bf approximate mix-point property} if
$$\underset{x\in X}{\inf}\underset{y\in Fx}{\sup} d^s(x,y)=0.$$
\end{definition}
Here, it is also very clear that $F$ has the approximate mix-point property if and only if $F$ has both 
the approximate startpoint and the approximate endpoint properties.

We illustrate our definitions by giving the following example. 
\begin{example}(Compare \cite{rico})
Let $X=\{0,1,2 \}$. The map $d:X\times X \to [0,\infty)$ defined by $d(0,1)=d(0,2)=0,\ d(1,0)=d(1,2)=1$, $d(2,0)=d(2,1)=2$ and $d(x,x)=0$ for all $x\in X$ is a $T_0$-quasi-pseudometric on $X$.
Let $F:X\to 2^X$ be the set mapping defined by $Fa=X\setminus \{a\}$ for any $a\in X$.
Then $F$ admits $0$ as unique startpoint, no endpoint and no fixed point. Moreover, $F$  has the approximate startpoint property but does not have the approximate endpoint property.
\end{example}

We recall below the main theorem in the startpoint theory that appeared in \cite{rico}.

\begin{theorem}\cite[Theorem 29]{rico}\label{thm1}
Let $(X,d)$ be a bicomplete quasi-pseudometric space. Let $F:X\to CB(X)$ be a set-valued map that satisfies
\begin{equation}
H(Fx,Fy) \leq \psi (d(x,y)), \ \text{ \ for each } x,y\in X,
\end{equation}
where $\psi:[0,\infty)\to [0,\infty)$ is upper semicontinuous, $\psi(t)<t$ for each $t>0$ and $\underset{t \to \infty }{\liminf} (t-\psi(t))>0$. Then there exists a unique $x_0\in X$ which is both a startpoint and an endpoint  of $F$ if and only if $F$ has the approximate mix-point property.
\end{theorem}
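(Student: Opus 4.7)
The equivalence has an easy direction (necessity) and a substantive one (sufficiency). For necessity: if $x_0$ is simultaneously a startpoint and an endpoint of $F$, then $\sup_{y\in Fx_0} d(x_0,y)=\sup_{y\in Fx_0} d(y,x_0)=0$, which gives $\sup_{y\in Fx_0}d^s(x_0,y)=0$ and hence the approximate mix-point property by taking $x=x_0$ in the infimum.

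For sufficiency, the approximate mix-point property produces a sequence $(x_n)\subset X$ with $\alpha_n:=\sup_{y\in Fx_n}d^s(x_n,y)\to 0$. My plan is to show $(x_n)$ is $d^s$-Cauchy. Fix $m,n$ and pick $y_n\in Fx_n$; using $\sup_{a\in Fx_n}d(a,Fx_m)\leq H(Fx_n,Fx_m)\leq \psi(d(x_n,x_m))$, for any $\varepsilon>0$ one can choose $y_m\in Fx_m$ with $d(y_n,y_m)\leq \psi(d(x_n,x_m))+\varepsilon$. The triangle inequality then gives
$$d(x_n,x_m)\leq d(x_n,y_n)+d(y_n,y_m)+d(y_m,x_m)\leq \alpha_n+\alpha_m+\psi(d(x_n,x_m))+\varepsilon,$$
so $d(x_n,x_m)-\psi(d(x_n,x_m))\leq \alpha_n+\alpha_m$, with a symmetric bound for $d(x_m,x_n)$. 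The hypothesis $\liminf_{t\to\infty}(t-\psi(t))>0$ forbids $\{d(x_n,x_m)\}$ from being unbounded, and any positive subsequential limit $L$ would satisfy $L\leq \limsup\psi(d(x_{n_k},x_{m_k}))\leq \psi(L)<L$ by upper semicontinuity, a contradiction. Thus $d^s(x_n,x_m)\to 0$, and bicompleteness provides a limit $x_0$.

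Next I would verify that $x_0$ is both a startpoint and an endpoint. The triangle inequality for $H$ (which follows directly from its definition) combined with the bounds on $H(\{x_n\},Fx_n)$ and $H(Fx_n,Fx_0)$ gives
$$H(\{x_0\},Fx_0)\leq d(x_0,x_n)+\alpha_n+\psi(d(x_n,x_0)),$$
and the right side tends to $0$ (using that $0\leq \psi(t)<t$ forces $\psi(t)\to 0$ as $t\to 0^+$). The symmetric chain yields $H(Fx_0,\{x_0\})=0$, so $x_0$ is a common start/endpoint. For uniqueness, any such point forces $Fx_0=\{x_0\}$, so if $x_0,x_0'$ were two common start/endpoints then $d(x_0,x_0')=H(\{x_0\},\{x_0'\})=H(Fx_0,Fx_0')\leq\psi(d(x_0,x_0'))$, forcing $d(x_0,x_0')=0$ (and likewise the reverse), giving $x_0=x_0'$ by the $T_0$ condition.

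The principal obstacle is the Cauchy step: because $\psi$ is neither monotone nor continuous, direct iteration of the contraction is unavailable. Instead one treats $d(x_n,x_m)-\psi(d(x_n,x_m))$ as a single quantity trapped between $0$ and $\alpha_n+\alpha_m$, and must simultaneously deploy all three hypotheses on $\psi$ (upper semicontinuity for subsequential limits, $\psi(t)<t$ to exclude positive limits, and $\liminf_{t\to\infty}(t-\psi(t))>0$ to rule out escape to infinity) to extract Cauchy-ness.
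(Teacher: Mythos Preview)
Your argument is correct, but it follows a genuinely different route from the one the paper describes. The paper (quoting \cite{rico}) proceeds via the Cantor intersection theorem: one sets
\[
C_n=\Bigl\{x\in X:\ \sup_{y\in Fx} d^s(x,y)\le \tfrac{1}{n}\Bigr\},
\]
checks that each $C_n$ is nonempty, $\tau(d^s)$-closed, that $C_{n+1}\subseteq C_n$, and that $\delta(C_n)\to 0$; the intersection is then a single point $x_0$, which automatically lies in every $C_n$ and is therefore simultaneously a startpoint and an endpoint, with uniqueness coming for free from the singleton intersection. Your approach instead extracts a single sequence $(x_n)$ with $\alpha_n\to 0$, shows it is $d^s$-Cauchy, and passes to the limit.

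The core analytic step is the same in both arguments: the inequality $d(x,y)-\psi(d(x,y))\le \alpha_n+\alpha_m$ (respectively $\le 2/n$), followed by the trichotomy using all three hypotheses on $\psi$ to force the distances to $0$. What the Cantor packaging buys is a cleaner endgame: since $x_0\in C_n$ for every $n$, the identities $H(\{x_0\},Fx_0)=H(Fx_0,\{x_0\})=0$ are immediate, and uniqueness requires no separate computation. Your sequential route is closer in spirit to classical Picard-type arguments and is arguably more constructive, but the verification that the limit is a start/endpoint needs one extra limit passage; note in particular that your line ``$0\le\psi(t)<t$ forces $\psi(t)\to 0$ as $t\to 0^+$'' handles the case $d(x_n,x_0)>0$ but tacitly uses $\psi(0)=0$ when $d(x_n,x_0)=0$. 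This is harmless (one may replace $\psi$ by $\tilde\psi$ with $\tilde\psi(0)=0$, $\tilde\psi=\psi$ on $(0,\infty)$, which remains upper semicontinuous and still satisfies the contractive inequality), but it is a small wrinkle that the level-set approach sidesteps entirely.
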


We introduce the following definitions: 

\begin{definition} Let $(X,d)$ be a  quasi-pseudometric space, $J :X\to X$ be a single valued mapping and $F:X\to 2^X$ be a multi-valued mapping. We say that the mappings $J$ and $F$ have the {\bf approximate startpoint property} (resp. {\bf approximate endpoint property} ), if
$$\underset{x\in X}{\inf}\underset{y\in Fx}{\sup} d(Jx,y)=0\ \ (\text{resp.}\  \underset{x\in X}{\inf}\underset{y\in Fx}{\sup} d(y,Jx)=0).$$
\end{definition}

\begin{definition}
Let $(X,d)$ be a $T_0$-quasi-pseudometric space, $J :X\to X$ be a single valued mapping. We say that $J$ and the set-valued map $F:X\to 2^X$ have the {\bf approximate mix-point property} if
$$\underset{x\in X}{\inf}\underset{y\in Fx}{\sup} d^s(Jx,y)=0.$$
\end{definition}

\begin{definition}(Compare \cite{rico})
Let $(X,d)$ be a  quasi-pseudometric space, $J :X\to X$ be a single valued mapping. Let $F:X\to 2^X$ be a set-valued map. An element $x\in X$ is said to be 
\begin{enumerate}
\item[(i)] a $J$-fixed point of $F$ if $Jx\in Fx$,
\item[(ii)] a startpoint of $J$ and $F$ if $H(\{Jx\},Fx)=0$,
\item[(iii)] an endpoint of $J$ and $F$ if $H(Fx,\{Jx\})=0$.
\end{enumerate}
\end{definition}

The next three results are the first results of this paper. We shall not give any proof, since the proofs follow the same arguments as the proofs in \cite{rico}.

\begin{theorem}\label{Res}(Compare\cite[Theorem 29]{rico})
Let $(X,d)$ be a bicomplete quasi-pseudometric space. Assume $J:X\to X$ is a continuous single-valued map and let $F:X\to CB(X)$ be a set-valued map that satisfiy
\begin{equation}
H(Fx,Fy) \leq \psi (d(Jx,Jy)), \ \text{ \ for each } x,y\in X,
\end{equation}
where $\psi:[0,\infty)\to [0,\infty)$ is usc\footnote{for upper semicontinuous}, $\psi(t)<t$ for each $t>0$ and $\underset{t \to \infty }{\liminf} (t-\psi(t))>0$. Then there exists a unique $x_0\in X$ which is both a startpoint and an endpoint  of $J$ and $F$ if and only if $J$ and $F$ have the approximate mix-point property.
\end{theorem}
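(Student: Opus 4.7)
The plan is to mirror the proof of Theorem~\ref{thm1} with $Jx$ systematically taking the role that $x$ played there, and to invoke continuity of $J$ at the step where the Cauchy limit is shown to realise the startpoint/endpoint property.

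\emph{Necessity} is immediate: if $x_0$ is both a startpoint and an endpoint of $J$ and $F$, the definitions give $\sup_{y\in Fx_0}d^s(Jx_0,y)=0$, and the nonnegative infimum defining the approximate mix-point property of $J$ and $F$ vanishes.

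\emph{Sufficiency.} I would extract from the approximate mix-point property a sequence $(x_n)\subset X$ with $\varepsilon_n:=\sup_{y\in Fx_n}d^s(Jx_n,y)\to 0$, and select $y_n\in Fx_n$. For indices $n,m$ and any $\delta>0$, closedness of $Fx_m$ lets me pick $y_m\in Fx_m$ with $d(y_n,y_m)\le H(Fx_n,Fx_m)+\delta\le \psi(d(Jx_n,Jx_m))+\delta$; the triangle inequality then yields
\begin{equation*}
d(Jx_n,Jx_m)-\psi(d(Jx_n,Jx_m))\;\le\;\varepsilon_n+\varepsilon_m.
\end{equation*}
The condition $\liminf_{t\to\infty}(t-\psi(t))>0$ bounds $d(Jx_n,Jx_m)$ uniformly in $n,m$ large, and upper semicontinuity of $\psi$ together with $\psi(t)<t$ excludes a positive limit, so $d(Jx_n,Jx_m)\to 0$; the symmetric argument with $d^{-1}$ gives $d(Jx_m,Jx_n)\to 0$. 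Hence $(Jx_n)$ is $d^s$-Cauchy and, by bicompleteness, converges to some $z\in X$; the estimate $d^s(Jx_n,y_n)\le\varepsilon_n$ forces $y_n\to z$ as well.

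The main obstacle, and the only genuinely new feature over Theorem~\ref{thm1}, is to upgrade $z$ to a point $x_0\in X$ with $Jx_0=z$ and $Fx_0=\{Jx_0\}$. Following the blueprint of \cite{rico}, one refines the sequence so that $(x_n)$ itself is $d^s$-Cauchy, sets $x_0:=\lim x_n$, and uses continuity of $J$ to conclude $Jx_0=\lim Jx_n=z$; the contraction $H(Fx_n,Fx_0)\le\psi(d(Jx_n,Jx_0))$ together with closedness of $Fx_0$ and $y_n\to Jx_0$ then forces $Fx_0=\{Jx_0\}$, i.e.\ $H(\{Jx_0\},Fx_0)=0=H(Fx_0,\{Jx_0\})$. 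Uniqueness follows by applying the contraction to two such points: if $x_0'$ is another, then $Fx_0=\{Jx_0\}$ and $Fx_0'=\{Jx_0'\}$, so $H(Fx_0,Fx_0')=d(Jx_0,Jx_0')\le\psi(d(Jx_0,Jx_0'))$ and the symmetric inequality, combined with $\psi(t)<t$, give $d^s(Jx_0,Jx_0')=0$; the $T_0$ axiom then yields $Jx_0=Jx_0'$ and $Fx_0=Fx_0'$.
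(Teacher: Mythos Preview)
Your approach differs from the paper's intended one. The paper does not write out a proof but directs the reader to mimic \cite[Theorem~29]{rico}, whose argument proceeds via the Cantor intersection theorem applied to the nested closed sets
\[
C_n=\Bigl\{x\in X:\ \sup_{y\in Fx}d^s(Jx,y)\le \tfrac{1}{n}\Bigr\},
\]
showing these are nonempty, $\tau(d^s)$-closed, decreasing, and of vanishing $d^s$-diameter; the intersection is then a singleton $\{x_0\}$, and any start-/endpoint of $J$ and $F$ lies in every $C_n$, which yields uniqueness in one stroke. Your route instead extracts an approximating sequence and tries to pass to a limit directly.

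There is a genuine gap at the crucial step. You correctly derive that $(Jx_n)$ is $d^s$-Cauchy, but the sentence ``one refines the sequence so that $(x_n)$ itself is $d^s$-Cauchy'' is not justified and does not follow from the blueprint of \cite{rico}: there $J=I_X$, so the issue never arises. With a general continuous $J$ nothing in the hypotheses lets you pass from Cauchyness of $(Jx_n)$ to Cauchyness (even of a subsequence) of $(x_n)$; this is precisely why the later results in the paper (Theorems~\ref{Res1}--\ref{Res4}) add the lower expansion condition $r\,d(x,y)\le d(Jx,Jy)$. Without such a hypothesis you also cannot guarantee that the limit $z$ of $(Jx_n)$ lies in the range of $J$, so the choice of $x_0$ with $Jx_0=z$ is unwarranted. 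The Cantor approach sidesteps this particular obstruction because it works with the sets $C_n\subset X$ directly and never needs to invert $J$.

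A second, smaller gap sits in your uniqueness argument: from $d^s(Jx_0,Jx_0')=0$ and the $T_0$ condition you obtain $Jx_0=Jx_0'$ (hence $Fx_0=Fx_0'$), but the theorem asserts uniqueness of $x_0$, not merely of $Jx_0$. The Cantor argument handles this cleanly since any start-/endpoint belongs to $\bigcap_n C_n=\{x_0\}$.
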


\begin{theorem}\label{Resone}(Compare\cite[Theorem 31]{rico})
Let $(X,d)$ be a bicomplete quasi-pseudometric space. Assume $J:X\to X$ is a continuous single-valued map and let $F:X\to CB(X)$ be a set-valued map that satisfy
\begin{equation}
H(Fx,Fy) \leq k (d(Jx,Jy)), \ \text{ \ for each } x,y\in X,
\end{equation}
where $k\in [0,1)$. Then there exists a unique $x_0\in X$ which is both a startpoint and an endpoint  of $J$ and $F$ if and only if $J$ and $F$ have the approximate mix-point property.
\end{theorem}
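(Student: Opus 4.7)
The plan is to obtain Theorem \ref{Resone} as a direct specialization of Theorem \ref{Res}, so the only work is checking the hypotheses for a particular choice of $\psi$.

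Set $\psi(t)=kt$ on $[0,\infty)$. Then $\psi$ is continuous, hence upper semicontinuous; for every $t>0$ one has $\psi(t)=kt<t$ because $k\in[0,1)$; and
$$\liminf_{t\to\infty}\bigl(t-\psi(t)\bigr)\;=\;\liminf_{t\to\infty}(1-k)t\;=\;+\infty\;>\;0.$$
All three requirements on $\psi$ in Theorem \ref{Res} are thus satisfied. The contractive hypothesis of Theorem \ref{Resone} reads $H(Fx,Fy)\le k\,d(Jx,Jy)=\psi\bigl(d(Jx,Jy)\bigr)$, so Theorem \ref{Res} applies verbatim and delivers both implications of the biconditional, along with the uniqueness of $x_0$.

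I expect essentially no obstacle in this reduction; the substantive analytic work is absorbed into Theorem \ref{Res}. If instead one wanted a self-contained argument tailored to the Lipschitz setting, one would proceed along the standard contraction route: use the approximate mix-point property to select a sequence $\{x_n\}\subset X$ with $\varepsilon_n:=\sup_{y\in Fx_n}d^s(Jx_n,y)\to 0$, and then, for each $\delta>0$, pick a point $z\in Fx_m$ with $d(y,z)<H(Fx_n,Fx_m)+\delta$ for some chosen $y\in Fx_n$. Combining the triangle inequality with $H(Fx_n,Fx_m)\le k\,d(Jx_n,Jx_m)$ yields
$$d(Jx_n,Jx_m)\;\le\;d(Jx_n,y)+d(y,z)+d(z,Jx_m)\;\le\;\varepsilon_n+k\,d(Jx_n,Jx_m)+\delta+\varepsilon_m,$$
hence $(1-k)\,d(Jx_n,Jx_m)\le \varepsilon_n+\varepsilon_m$ after letting $\delta\downarrow 0$, and symmetrically for $d(Jx_m,Jx_n)$. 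Therefore $\{Jx_n\}$ is $d^s$-Cauchy, so bicompleteness produces a limit, and the continuity of $J$ together with the closedness of each $Fx\in CB(X)$ allows one to identify the limit as $Jx_0$ for the claimed startpoint--endpoint $x_0$. The converse is immediate: if $x_0$ is simultaneously a startpoint and an endpoint of $J$ and $F$, then $H(\{Jx_0\},Fx_0)=0=H(Fx_0,\{Jx_0\})$ forces $\sup_{y\in Fx_0}d^s(Jx_0,y)=0$, so the infimum defining the approximate mix-point property vanishes (and is attained at $x_0$).
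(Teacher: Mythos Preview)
Your primary argument---specializing Theorem \ref{Res} with $\psi(t)=kt$---is correct and is exactly the route the paper intends: no separate proof is given for Theorem \ref{Resone}, the paper remarking only that it follows the same arguments as in \cite{rico}, where the linear case is the obvious corollary of the $\psi$-version.

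One caution about your alternative self-contained sketch: you establish that $\{Jx_n\}$ is $d^s$-Cauchy and then invoke bicompleteness, but Theorem \ref{Resone} carries no hypothesis (such as the expansiveness condition $rd(x,y)\le d(Jx,Jy)$ used later in Theorem \ref{Res1}) that lets you pass from convergence of $\{Jx_n\}$ to convergence of $\{x_n\}$, nor any surjectivity of $J$ that would let you write the limit as $Jx_0$ for some $x_0\in X$. As written, that alternative route does not close; the reduction to Theorem \ref{Res} is the clean path, and you should drop the secondary sketch or flag it as requiring the extra hypothesis.
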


\begin{theorem}\label{Restwo}(Compare\cite[Corollary 30]{rico})
Let $(X,d)$ be a bicomplete quasi-pseudometric space. Assume $J:X\to X$ is a continuous single-valued map. Let $F:X\to CB(X)$ be a set-valued map that satisfies
\begin{equation}
H(Fx,Fy) \leq \psi (d(Jx,Jy)), \ \text{ \ for each } x,y\in X,
\end{equation}
where $\psi:[0,\infty)\to [0,\infty)$ is an upper semicontinuous map that satisfies $\psi(t)<t$ for each $t>0$ and $\underset{t \to \infty }{\liminf} (t-\psi(t))>0$. If $J$ and $F$ have the approximate mix-point property then $F$ has a $J$-fixed point.
\end{theorem}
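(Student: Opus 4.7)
The plan is to deduce this as a direct consequence of Theorem \ref{Res}. Since the hypotheses on $J$, $F$, $\psi$ and the approximate mix-point property are exactly those of Theorem \ref{Res}, I would invoke that theorem first to produce a point $x_0\in X$ which is simultaneously a startpoint and an endpoint of the pair $(J,F)$, i.e.\ satisfying $H(\{Jx_0\},Fx_0)=0$ and $H(Fx_0,\{Jx_0\})=0$.

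Next I would unpack what those two equalities mean in terms of the defining formula for $H$. Writing out $H(\{Jx_0\},Fx_0)$, the supremum over the singleton $\{Jx_0\}$ gives $d(Jx_0,Fx_0)$, while the supremum over $Fx_0$ gives $\sup_{y\in Fx_0} d(Jx_0,y)$; the max of these being $0$ forces $d(Jx_0,y)=0$ for every $y\in Fx_0$. Symmetrically, $H(Fx_0,\{Jx_0\})=0$ forces $d(y,Jx_0)=0$ for every $y\in Fx_0$. Combining, $d^s(Jx_0,y)=0$ for every $y\in Fx_0$.

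At this stage I would appeal to the $T_0$-property (built into the notion of bicompleteness used throughout the paper, since bicompleteness is defined via $d^s$ being a metric on $X$, which requires the $T_0$ condition): $d^s(Jx_0,y)=0$ implies $y=Jx_0$. Hence $Fx_0=\{Jx_0\}$, and in particular $Jx_0\in Fx_0$, which is exactly the definition of $x_0$ being a $J$-fixed point of $F$. This is the same observation already made in the remark following the definition of startpoint/endpoint/fixed point, only adapted to the $J$-decorated version.

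I do not expect any genuine obstacle here: the argument is essentially a one-line reduction to Theorem \ref{Res} plus the elementary observation that a common startpoint--endpoint of $(J,F)$ collapses $Fx_0$ to the singleton $\{Jx_0\}$. The only mildly delicate point is to make sure the paper's convention is that ``bicomplete quasi-pseudometric space'' implicitly carries the $T_0$ condition (as the definition of bicompleteness requires $d^s$ to be a metric); if one wanted to avoid that reading, one could instead weaken the conclusion to $Jx_0\in \overline{Fx_0}^{\,d^s}$ and then use that $Fx_0\in CB(X)$ is closed with respect to $d^s$ to still conclude $Jx_0\in Fx_0$.
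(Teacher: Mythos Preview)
Your proposal is correct and matches the paper's intended approach: the paper omits the proof of this theorem, stating only that it ``follow[s] the same arguments as the proofs in \cite{rico}'', and the proof it does give for the analogous Corollary after Theorem~\ref{Res1} is exactly your argument (invoke the startpoint/endpoint existence theorem, then use the $T_0$-condition to get $Jx_0\in Fx_0$). Your added remark about the $T_0$ assumption and the alternative via closedness of $Fx_0$ is a nice clarification that the paper leaves implicit.
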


\begin{remark}
Observe that if we put $J=I_X$(identity map on $X$) in Theorems \ref{Res}, \ref{Resone} and \ref{Restwo} respectively, we obtain \cite[Theorem 29, Theorem 31, Corollary 30]{rico} respectively.
\end{remark}

\section{More results}
In \cite{rico}, the proof of Theorem \ref{thm1} basically establishes that the sets 

\[
 C_n=\left\lbrace x \in X: \underset{y\in Fx}{\sup} d^s(x,y)\leq \frac{1}{n} \right\rbrace \neq\emptyset, \ \quad \text{ for } n\in \mathbb{N}=\{1,2,\cdots\},
\]
form a non-increasing sequence of bounded and $\tau(d^s)$-closed sets. The conclusion follows from the Cantor intersection theorem. We shall use a similar approach in proving the next two results, with the difference that we present simpler and shorter arguments. 

We now present the first non trivial generalisation of \cite[Theorem 29]{rico}.

\begin{theorem}\label{Res1}
Let $(X,d)$ be a bicomplete quasi-pseudometric space. Assume $J:X\to X$ is a continuous single-valued map such that $rd(x,y)\leq d(Jx,Jy)$ for some constant $r>0$. Let $F:X\to CB(X)$ be a set-valued map that satisfies
\begin{equation}\label{res1-eq}
H(Fx,Fy) \leq \alpha d(Jx,Jy), \ \text{ \ for each } x,y\in X,
\end{equation}
where $\alpha\in (0,1)$ and $r\alpha<1$. Then there exists a unique $x_0\in X$ which is both a startpoint and an endpoint of $J$ and $F$ if and only if $J$ and $F$ have the approximate mix-point property.
\end{theorem}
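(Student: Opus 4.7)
The plan is to prove the two implications separately. The ``only if'' direction is immediate, since an $x_0$ that is both startpoint and endpoint of $J$ and $F$ satisfies $\sup_{y \in Fx_0} d^s(Jx_0, y) = 0$, so the infimum in the approximate mix-point property is attained and equals zero. For the substantive ``if'' direction, I would follow the $C_n$ strategy sketched by the author: set
\[
C_n = \left\{ x \in X : \sup_{y \in Fx} d^s(Jx, y) \leq 1/n \right\},
\]
which is non-empty by the approximate mix-point property, nested with $C_{n+1} \subseteq C_n$, and should be shown to have $d^s$-diameter tending to zero.

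The heart of the argument is the diameter bound: for $x, y \in C_n$, I will show $d^s(x, y) \leq \frac{2}{rn(1-\alpha)}$. Fixing $u \in Fx$ and picking $v \in Fy$ within $\varepsilon$ of $d(u, Fy) \leq H(Fx, Fy) \leq \alpha d(Jx, Jy)$, the triangle inequality together with $x, y \in C_n$ yields
\[
d(Jx, Jy) \leq \tfrac{1}{n} + \alpha d(Jx, Jy) + \varepsilon + \tfrac{1}{n},
\]
so after $\varepsilon \to 0$, $d(Jx, Jy) \leq \frac{2}{n(1-\alpha)}$. The symmetric inequality for $d(Jy, Jx)$ uses the other half of $H$, which by (\ref{res1-eq}) with $x$ and $y$ swapped is dominated by $\alpha d(Jy, Jx)$. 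The expansion hypothesis $r d(\cdot, \cdot) \leq d(J\cdot, J\cdot)$ then transfers both bounds back to $d^s(x, y)$.

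With this estimate in hand, any choice $x_n \in C_n$ is $d^s$-Cauchy, and bicompleteness gives a limit $x_0 \in X$. To verify that $x_0$ is both a startpoint and an endpoint of $J$ and $F$, I would fix $y \in Fx_0$ and bound $d(Jx_0, y)$ and $d(y, Jx_0)$ separately by routing through approximants $y_n, y_n' \in Fx_n$: existence of $y_n$ with $d(y_n, y) \to 0$ follows from $H(Fx_n, Fx_0) \to 0$, and existence of $y_n'$ with $d(y, y_n') \to 0$ follows from $H(Fx_0, Fx_n) \to 0$, both using continuity of $J$ to drive $d^s(Jx_n, Jx_0) \to 0$. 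Uniqueness reduces quickly: any two such points $x_0, x_0'$ satisfy $Fx_0 = \{Jx_0\}$ and $Fx_0' = \{Jx_0'\}$, so (\ref{res1-eq}) forces $d^s(Jx_0, Jx_0') \leq \alpha d(Jx_0, Jx_0')$ and hence $Jx_0 = Jx_0'$ since $\alpha < 1$, after which the expansion hypothesis gives $x_0 = x_0'$.

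I expect the bookkeeping in the diameter estimate to be the main obstacle: since $d$ is asymmetric, the arguments for $d(Jx, Jy)$ and $d(Jy, Jx)$ must invoke two different halves of $H$, and one has to choose the approximating $v$ (respectively $u$) so that the correct quantity gets bounded by $\alpha$ times the right directed distance. The condition $r\alpha < 1$ does not appear strictly necessary for the diameter estimate to close as I have written it; I suspect it is included for consistency with the author's broader framework rather than being forced by the proof.
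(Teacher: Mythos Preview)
Your argument is correct and shares its core with the paper's proof: both hinge on the same diameter bound $\delta(C_n)\le \frac{2}{rn(1-\alpha)}$, and your pointwise derivation via $u\in Fx$, $v\in Fy$ is just an unpacking of the paper's one-line estimate
\[
d(Jx,Jy)=H(\{Jx\},\{Jy\})\le H(\{Jx\},Fx)+H(Fx,Fy)+H(Fy,\{Jy\})\le \tfrac{2}{n}+\alpha\,d(Jx,Jy).
\]
Where you genuinely diverge is in how you extract $x_0$. The paper invokes the Cantor intersection theorem, which requires checking that each $C_n$ is $\tau(d^s)$-closed (done there by noting that $x\mapsto \sup_{y\in Fx}d^s(Jx,y)$ is $\tau(d^s)$-lower semicontinuous); membership of $x_0$ in every $C_n$ then gives the startpoint/endpoint property for free, and uniqueness is immediate since the intersection is a singleton. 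You instead pick $x_n\in C_n$, pass to a $d^s$-limit by bicompleteness, and then verify the startpoint/endpoint property of the limit by hand using continuity of $J$ and the contraction on $F$; your uniqueness argument likewise works directly from the contraction and the expansion hypothesis on $J$. Both routes are valid: the paper's is shorter once the lsc claim is granted, while yours is more self-contained and sidesteps the closedness verification entirely. Your closing remark is also accurate: neither the paper's proof nor yours actually uses the condition $r\alpha<1$.
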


\begin{proof}

It is clear that if  $J$ and $F$ admit a point which is both a startpoint and an endpoint, then  $J$ and $F$ have the approximate startpoint property and the approximate endpoint property, i.e the approximate mix-point property.
Conversely, suppose $J$ and $F$ have the approximate mix-point property. Then

\[
C_n= \left\lbrace x \in X: \underset{y\in Fx}{\sup} d^s(Jx,y)\leq \frac{1}{n} \right\rbrace \neq\emptyset,
\]
for each $n\in \mathbb{N}$. Moreover for each $n\in \mathbb{N}$, $C_{n+1} \subseteq C_n$. Since the map $x\mapsto \underset{y\in Fx}{\sup} d^s(Jx,y)$ is $\tau(d^s)$-lsc\footnote{For lower semicontinuous} (as supremum of $\tau(d^s)$-continuous mappings), then $C_n$ is $\tau(d^s)$-closed.

Next we prove that for each $n\in \mathbb{N},$ $C_n$ is bounded. Indeed, for any $x,y \in C_n,$

\begin{align*}
d(Jx,Jy) & = H(\{Jx\},\{Jy\})\\
         & \leq H(\{Jx\},Fx) + H(Fx,Fy) + H(Fy,\{Jy\})\\
         & \leq \frac{2}{n} + \alpha d(Jx,Jy).
\end{align*}
So
 \[
 d(Jx,Jy) \leq \frac{2}{n(1-\alpha)}, 
\]

and since $rd(x,y)\leq d(Jx,Jy)$, we have 

\[
\delta(C_n) \leq \frac{2}{rn(1-\alpha)} .
\]
Therefore $\underset{n \rightarrow \infty}{\lim} \delta(C_n) = 0$. It follows from the Cantor intersection theorem that $$\underset{n \in N}{\bigcap}C_n = \{x_0\}.$$ 

Thus $$H(\{Jx_0\},Fx_0)=\underset{y\in Fx_0}{\sup} d(Jx_0,y)=0=\underset{y\in Fx_0}{\sup} d(y,Jx_0)=H(Fx_0,\{Jx_0\}).$$  For uniqueness, if $z_0$ is an arbitrary startpoint and endpoint of $J$ and $F$, then $$H(\{Jz_0\},Fz_0)=0=H(Fz_0,\{Jz_0\}),$$ and so $z_0 \in \underset{n \in N}{\bigcap}C_n = \{x_0\}$.
\end{proof}

\vspace*{0.2cm}

We give the following example to illustrate our result.

\begin{example}

Consider the $T_0$-quasi-metric space $(X,d)$ where $X=\{0,1\}$ and $d$ be defined by $d(0,1)=0, \ d(1,0)=1$ and $d(x,x)=0$ for $x=0,1$. Note that $(X,d)$ is bicomplete. We define on $X$ the set-valued map $F:X\to 2^X$ by $Fx=\{0\}$ and the single-valued continuous mapping $J:X\to X$ by $Jx=x^2$. It is clear that for all $x,y \in X,H(Fx,Fy)=0$.

For $x=0, y=1, \ d(x,y)=d(0,1)= 0,$ and $ d(Jx,Jy)=d(0,1)= 0.$ 

For $x=1, y=0, \ d(x,y)=d(1,0)= 1,$ and $ d(Jx,Jy)=d(1,0)= 1.$ 

So if we set $r=\frac{1}{2}$ and $\alpha=\frac{1}{3}$, we have that $r\alpha = \frac{1}{6}<1$ and

$$rd(x,y)\leq d(Jx,Jy).$$

Moreover, the condition \eqref{res1-eq} is satisfied. Since $H(\{J0\},F0)=0=H(F0,\{J0\})$, then $0$ is both a startpoint and an endpoint of $J$ and $F$.

\newpage

Observe also that:
 
-for $x=0, Fx=\{0\}, \underset{y\in Fx}{\sup} d^s(Jx,y)= d^s(0,0)=0,$

-for $x=1, Fx=\{0\}, \underset{y\in Fx}{\sup} d^s(Jx,y)=d^s(1,0)=1,$
and hence
$$\underset{x\in X}{\inf}\underset{y\in Fx}{\sup} d^s(Jx,y)=0.$$
i.e. $J$ and $F$ have the approximate mix-point property.
\end{example}

\begin{corollary}
Let $(X,d)$ be a bicomplete quasi-pseudometric space. Assume $J:X\to X$ is a continuous single-valued map such that $rd(x,y)\leq d(Jx,Jy)$ for some constant $r>0$ and $ \text{ for each } x,y\in X$. Let $F:X\to CB(X)$ be a set-valued map that satisfies
\begin{equation}
H(Fx,Fy) \leq \alpha d(Jx,Jy), \ \text{ \ for each } x,y\in X,
\end{equation}
where $\alpha\in (0,1)$ and $r\alpha<1$. If $J$ and $F$ have the approximate mix-point property then $F$ has a $J$-fixed point.
\end{corollary}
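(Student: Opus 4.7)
The plan is to reduce the statement to Theorem~\ref{Res1} and then convert the simultaneous startpoint/endpoint conclusion into a $J$-fixed point using the $T_0$ separation property built into the notion of bicompleteness.

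First, I would observe that the hypotheses of this corollary are exactly those of Theorem~\ref{Res1}: $(X,d)$ is bicomplete, $J$ is continuous with $rd(x,y)\le d(Jx,Jy)$, $F$ is $CB(X)$-valued with $H(Fx,Fy)\le\alpha d(Jx,Jy)$, the constants satisfy $\alpha\in(0,1)$ and $r\alpha<1$, and $J$ and $F$ have the approximate mix-point property. Therefore Theorem~\ref{Res1} yields a (unique) $x_0\in X$ that is simultaneously a startpoint and an endpoint of $J$ and $F$, i.e.
\[
H(\{Jx_0\},Fx_0)=0=H(Fx_0,\{Jx_0\}).
\]

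Next, I would unpack these two equalities using the definition of $H$. Since $\sup\ge\inf$, the maximum collapses to the $\sup$ term in each case, so the above identities become
\[
\sup_{y\in Fx_0} d(Jx_0,y)=0 \qquad\text{and}\qquad \sup_{y\in Fx_0} d(y,Jx_0)=0.
\]
Hence for every $y\in Fx_0$ one has $d(Jx_0,y)=0=d(y,Jx_0)$.

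Finally, because $(X,d)$ is bicomplete it is in particular a $T_0$-quasi-metric space, so the $T_0$-condition forces $y=Jx_0$ for every $y\in Fx_0$. Therefore $Fx_0=\{Jx_0\}$, which gives $Jx_0\in Fx_0$, showing that $x_0$ is a $J$-fixed point of $F$. There is no real obstacle here; the corollary is essentially a packaging of Theorem~\ref{Res1} together with the standard observation (already recorded in the Remark following the startpoint definition) that a point which is simultaneously a startpoint and an endpoint of a multivalued map must have its image reduced to a singleton, in which case being a startpoint coincides with being a (here $J$-)fixed point.
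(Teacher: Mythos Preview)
Your proof is correct and follows exactly the paper's approach: apply Theorem~\ref{Res1} to obtain $x_0$ with $H(\{Jx_0\},Fx_0)=0=H(Fx_0,\{Jx_0\})$, and then invoke the $T_0$-condition to deduce $Jx_0\in Fx_0$. You simply spell out in full the last step that the paper leaves implicit.
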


\begin{proof}

From Theorem \ref{Res1}, we conclude that there exists $x_0 \in X$ which is both a startpoint and an endpoint for $J$ and $F$, i.e $H(\{Jx_0\},Fx_0)=0=H(Fx_0,\{Jx_0\})$. The $T_0$-condition therefore guarantees the desired result.

\end{proof}

\begin{theorem}\label{Res2}
Let $(X,d)$ be a bicomplete quasi-pseudometric space. Assume $J:X\to X$ is a continuous single-valued map such that $rd(x,y)\leq d(Jx,Jy)$ for some constant $r>0$ and $ \text{ for each } x,y\in X$. Let $F:X\to CB(X)$ be a set-valued map that satisfies
\begin{equation}\label{res2-eq}
H(Fx,Fy) \leq \alpha[ d(Jx,Fx ) + d(Jy,Fy)], \ \text{ \ for each } x,y\in X,
\end{equation}
where $\alpha\in (0,1/2)$. Then there exists a unique $x_0\in X$ which is both a startpoint and an endpoint of $J$ and $F$ if and only if $J$ and $F$ have the approximate mix-point property.
\end{theorem}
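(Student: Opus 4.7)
The forward direction is immediate, since if $x_0$ is both a startpoint and an endpoint of $J$ and $F$, then $\sup_{y\in Fx_0} d^s(Jx_0,y) = 0$, which forces the infimum defining the approximate mix-point property to be $0$. So the plan is to establish the converse by imitating the Cantor intersection strategy from the proof of Theorem \ref{Res1}, with the main twist coming from the Kannan-type contractive condition \eqref{res2-eq}.

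Assuming the approximate mix-point property, I would set
\[
C_n = \left\{ x \in X : \sup_{y\in Fx} d^s(Jx,y) \leq \tfrac{1}{n} \right\}, \quad n \in \mathbb{N},
\]
and verify exactly as in Theorem \ref{Res1} that each $C_n$ is nonempty, that $C_{n+1}\subseteq C_n$, and that $C_n$ is $\tau(d^s)$-closed, since $x\mapsto \sup_{y\in Fx} d^s(Jx,y)$ is $\tau(d^s)$-lower semicontinuous (a supremum of $\tau(d^s)$-continuous maps, using the continuity of $J$). These steps are formally identical to the ones in the previous theorem.

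The key step, and the only place where the new hypothesis really enters, is bounding $\delta(C_n)$. Given $x,y\in C_n$, I would first observe that
\[
d(Jx,Fx) \leq \sup_{u\in Fx} d(Jx,u) \leq \sup_{u\in Fx} d^s(Jx,u) \leq \tfrac{1}{n},
\]
and likewise $d(Jy,Fy)\leq 1/n$; this is the substitute for the naive estimate that worked in the Banach-type case. Now inserting \eqref{res2-eq} into the triangle inequality
\[
d(Jx,Jy) \leq H(\{Jx\},Fx) + H(Fx,Fy) + H(Fy,\{Jy\}),
\]
the outer two terms are each $\leq 1/n$, while $H(Fx,Fy) \leq \alpha\big[d(Jx,Fx) + d(Jy,Fy)\big] \leq 2\alpha/n$, yielding $d(Jx,Jy) \leq (2+2\alpha)/n$. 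The symmetric bound on $d(Jy,Jx)$ follows the same way, and the hypothesis $rd(x,y)\leq d(Jx,Jy)$ then gives $\delta(C_n) \leq (2+2\alpha)/(rn)\to 0$.

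The Cantor intersection theorem for the complete metric space $(X,d^s)$ then provides a unique $x_0$ with $\bigcap_n C_n = \{x_0\}$, from which $\sup_{y\in Fx_0}d^s(Jx_0,y)=0$, and thus $H(\{Jx_0\},Fx_0)=0=H(Fx_0,\{Jx_0\})$. Uniqueness is handled exactly as in Theorem \ref{Res1}: any startpoint-and-endpoint $z_0$ of $J$ and $F$ lies in every $C_n$ and hence equals $x_0$. The main obstacle I expect is the diameter bound: note that, unlike in Theorem \ref{Res1}, the threshold $\alpha<1/2$ is not needed to close the algebra on $d(Jx,Jy)$ (since the right-hand side of \eqref{res2-eq} no longer involves $d(Jx,Jy)$), but it is the natural range in which the Kannan-type condition behaves well and under which $F$ genuinely admits a startpoint/endpoint — one should keep an eye on whether any compatibility with $r\alpha$ is implicitly used, but the argument above needs only $\alpha<\infty$ once the $C_n$ are populated, so the Cantor step goes through unchanged.
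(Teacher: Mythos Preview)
Your proposal is correct and follows essentially the same route as the paper's own proof: the same sets $C_n$, the same lower semicontinuity argument for closedness, the same triangle-inequality chain $d(Jx,Jy)\le H(\{Jx\},Fx)+H(Fx,Fy)+H(Fy,\{Jy\})\le \tfrac{1}{n}(2+2\alpha)$, the same use of $r d(x,y)\le d(Jx,Jy)$ to bound $\delta(C_n)$, and the same Cantor-intersection conclusion and uniqueness argument. Your added remark that the bound on $\delta(C_n)$ does not itself require $\alpha<1/2$ is accurate and consistent with the paper, which also never uses that restriction in the displayed estimates.
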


\begin{proof}
Once again, only one implication will be of interest to us, since the other one is trivial. So suppose  $J$ and $F$ have the approximate mix-point property. Then we already know that the sets 

\[
C_n= \left\lbrace x \in X: \underset{y\in Fx}{\sup} d^s(Jx,y)\leq \frac{1}{n} \right\rbrace \neq\emptyset,
\]
for each $n\in \mathbb{N}$ are $\tau(d^s)$-closed and that $C_{n+1} \subseteq C_n$.

Next we prove that for each $n\in \mathbb{N},$ $C_n$ is bounded. Indeed, for any $x,y \in C_n,$

\begin{align*}
d(Jx,Jy) & = H(\{Jx\},\{Jy\})\\
         & \leq H(\{Jx\},Fx) + H(Fx,Fy) + H(Fy,\{Jy\})\\
         & \leq \frac{2}{n} + \alpha [d(Jx,Fx ) + d(Jy,Fy)]\\
         & \leq \frac{1}{n} (2+2\alpha).
\end{align*}
So
 \[
 d(Jx,Jy) \leq \frac{1}{n} (2+2\alpha),
\]

and since $rd(x,y)\leq d(Jx,Jy)$, we have 

\[
\delta(C_n) \leq \frac{1}{rn} (2+2\alpha).
\]
Therefore $\underset{n \rightarrow \infty}{\lim} \delta(C_n) = 0$. It follows from the Cantor intersection theorem that $$\underset{n \in N}{\bigcap}C_n = \{x_0\}.$$ Thus $$H(\{Jx_0\},Fx_0)=\underset{y\in Fx_0}{\sup} d(Jx_0,y)=0=\underset{y\in Fx_0}{\sup} d(y,Jx_0)=H(Fx_0,\{Jx_0\}).$$  For uniqueness, if $z_0$ is an arbitrary startpoint and endpoint of $J$ and $F$, then $$H(\{Jz_0\},Fz_0)=0=H(Fz_0,\{Jz_0\}),$$ and so $z_0 \in \underset{n \in N}{\bigcap}C_n = \{x_0\}$.

\end{proof}

\begin{example}
Consider the $T_0$-quasi-metric space $(X,d)$ where $X=\{0,1\}$ and $d$ defined by $d(0,1)=0, \ d(1,0)=1$ and $d(x,x)=0$ for $x=0,1$. Note that $(X,d)$ is bicomplete. We define on $X$ the set-valued map $F:X\to 2^X$ by $Fx=\{0\}$ and the single valued continuous map $J:X\to X$ by $Jx=x^3$. It is clear that for all $x,y \in X,H(Fx,Fy)=0$.

For $x=0, y=1, d(x,y)=d(0,1)= 0, d(Jx,Jy)=d(0,1)= 0. $ 

For $x=1, y=0, d(x,y)=d(1,0)= 1, d(Jx,Jy)=d(1,0)= 1. $ 
So if we set $r=\frac{1}{2}$, we have that 

$$rd(x,y)\leq d(Jx,Jy).$$

So if we set $\alpha=\frac{1}{3}$, we have that $0<\alpha<\frac{1}{2}$. Since $H(\{J0\},F0)=0=H(F0,\{J0\})$, then $0$ is both a startpoint and an endpoint of $J$ and $F$.

For $x=0, y=1,\alpha[ d(Jx,Fx ) + d(Jy,Fy)] = \frac{1}{3}$ and 

for $x=1, y=0,\alpha[ d(Jx,Fx ) + d(Jy,Fy)] = \frac{1}{3}$, so the condition \eqref{res2-eq} is satisfied.

Observe also that 
 
$$\underset{x\in X}{\inf}\underset{y\in Fx}{\sup} d^s(Jx,y)=0.$$
i.e. $J$ and $F$ have the approximate mix-point property.
\end{example}

\begin{corollary}
Let $(X,d)$ be a bicomplete quasi-pseudometric space. Assume $J:X\to X$ is a continuous single-valued map such that $rd(x,y)\leq d(Jx,Jy)$ for some constant $r>0$ and $ \text{ for each } x,y\in X$. Let $F:X\to CB(X)$ be a set-valued map that satisfies

\begin{equation}
H(Fx,Fy) \leq \alpha[ d(Jx,Fx ) + d(Jy,Fy)], \ \text{ \ for each } x,y\in X,
\end{equation}
where $\alpha\in (0,1/2)$. If $J$ and $F$ have the approximate mix-point property then $F$ has a $J$-fixed point.
\end{corollary}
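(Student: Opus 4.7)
The plan is to reduce this corollary to Theorem \ref{Res2} in exactly the same way that the preceding corollary reduces to Theorem \ref{Res1}. All the hypotheses of Theorem \ref{Res2} are literally present in the corollary's statement: bicompleteness of $(X,d)$, continuity of $J$, the expansive-type estimate $rd(x,y)\leq d(Jx,Jy)$, the Kannan-type contraction condition on $F$ with the same constant $\alpha\in(0,1/2)$, and the approximate mix-point property for $J$ and $F$. So the first step is simply to invoke Theorem \ref{Res2} and obtain the unique point $x_0\in X$ which is simultaneously a startpoint and an endpoint of $J$ and $F$, meaning
\[
H(\{Jx_0\},Fx_0)=0=H(Fx_0,\{Jx_0\}).
\]

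The second step is to turn this pair of equalities into the $J$-fixed point conclusion $Jx_0\in Fx_0$. Unpacking the definition of $H$, the two equalities give $\sup_{y\in Fx_0}d(Jx_0,y)=0$ and $\sup_{y\in Fx_0}d(y,Jx_0)=0$. Hence for every $y\in Fx_0$ we have $d(Jx_0,y)=d(y,Jx_0)=0$, i.e.\ $d^s(Jx_0,y)=0$. Bicompleteness of $(X,d)$ forces $d$ to be a $T_0$-quasi-metric (otherwise $d^s$ would not even be a metric), so the $T_0$-condition applies and yields $y=Jx_0$ for each $y\in Fx_0$. Therefore $Fx_0=\{Jx_0\}$, which in particular gives $Jx_0\in Fx_0$, as required.

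There is no real obstacle here: the entire content of the corollary is carried by Theorem \ref{Res2}, and the passage from the startpoint/endpoint property to a $J$-fixed point is the same one-line invocation of the $T_0$-condition that was used in the previous corollary. The only point worth flagging, to keep the argument self-contained, is the observation that the $T_0$-property is built into the notion of bicompleteness, so no extra hypothesis is needed to run the final implication.
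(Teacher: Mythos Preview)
Your proposal is correct and follows exactly the same route as the paper: invoke Theorem~\ref{Res2} to obtain a point $x_0$ that is both a startpoint and an endpoint of $J$ and $F$, and then appeal to the $T_0$-condition to conclude $Jx_0\in Fx_0$. Your write-up simply unpacks the final step in slightly more detail than the paper's one-line remark.
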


\newpage

Using the same idea as in the proof of Theorem \ref{Res1} and Theorem \ref{Res2}, one can establish the following results:

\begin{theorem}\label{Res3}
Let $(X,d)$ be a bicomplete quasi-pseudometric space. Assume $J:X\to X$ is a continuous single-valued map such that $rd(x,y)\leq d(Jx,Jy)$ for some constant $r>0$ and $ \text{ for each } x,y\in X$. Let $F:X\to CB(X)$ be a set-valued map that satisfies
\begin{equation}
H(Fx,Fy) \leq \alpha[ d(Jx,Fy ) + d(Fx,Jy)], \ \text{ \ for each } x,y\in X,
\end{equation}
where $0<\alpha<1/2$ with $2r\alpha<1$. Then there exists a unique $x_0\in X$ which is both a startpoint and an endpoint of $J$ and $F$ if and only if $J$ and $F$ have the approximate mix-point property.
\end{theorem}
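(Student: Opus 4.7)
The plan is to follow the same scheme as in Theorems \ref{Res1} and \ref{Res2}: the forward direction is immediate (a common startpoint and endpoint makes the infimum in the mix-point property vanish), so I would concentrate on the converse. Assuming the approximate mix-point property for $J$ and $F$, I would introduce, for each $n \in \mathbb{N}$, the level set
$$C_n = \left\{ x \in X : \sup_{y \in Fx} d^s(Jx,y) \leq \tfrac{1}{n} \right\},$$
which is non-empty by the mix-point hypothesis, nested ($C_{n+1}\subseteq C_n$), and $\tau(d^s)$-closed by the argument already invoked in the previous theorems (continuity of $J$ makes $x\mapsto d^s(Jx,y)$ continuous, and the supremum of continuous maps is lower semicontinuous).

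The real work is bounding $\delta(C_n)$. For $x,y\in C_n$, I would use the triangle inequality for $H$ together with the contractive hypothesis to write
$$d(Jx,Jy) \leq H(\{Jx\},Fx) + H(Fx,Fy) + H(Fy,\{Jy\}) \leq \tfrac{2}{n} + \alpha\bigl[d(Jx,Fy) + d(Fx,Jy)\bigr].$$
To decouple the two ``cross'' terms from $F$, I would apply the triangle inequality in $(X,d)$ followed by an infimisation: $d(Jx,Fy)\leq d(Jx,Jy)+d(Jy,Fy)\leq d(Jx,Jy)+\tfrac{1}{n}$ and $d(Fx,Jy)\leq d(Fx,Jx)+d(Jx,Jy)\leq \tfrac{1}{n}+d(Jx,Jy)$, using the fact that the one-sided distances from $Jx$ (resp.\ to $Jx$) to $Fx$ are controlled by $\tfrac{1}{n}$ on $C_n$. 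Substituting back yields $(1-2\alpha)d(Jx,Jy)\leq (2+2\alpha)/n$, and the hypothesis $\alpha<1/2$ allows me to solve for $d(Jx,Jy)$. Combining with the expansion estimate $rd(x,y)\leq d(Jx,Jy)$, I obtain
$$\delta(C_n) \leq \frac{2+2\alpha}{rn(1-2\alpha)} \xrightarrow[n\to\infty]{} 0.$$

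The Cantor intersection theorem in the complete metric space $(X,d^s)$ then forces $\bigcap_{n\in\mathbb{N}} C_n=\{x_0\}$, and by construction $\sup_{y\in Fx_0} d^s(Jx_0,y)=0$, so $H(\{Jx_0\},Fx_0)=H(Fx_0,\{Jx_0\})=0$. Uniqueness is then automatic: any other common startpoint and endpoint $z_0$ of $J$ and $F$ satisfies $\sup_{y\in Fz_0} d^s(Jz_0,y)=0\leq\tfrac{1}{n}$ for every $n$, hence lies in every $C_n$ and equals $x_0$. The one delicate point I anticipate is the self-referential appearance of $d(Jx,Jy)$ inside the bounds for $d(Jx,Fy)$ and $d(Fx,Jy)$; absorbing the resulting $2\alpha\,d(Jx,Jy)$ term on the left is precisely what consumes the hypothesis $\alpha<1/2$. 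The supplementary assumption $2r\alpha<1$ does not appear to intervene in the diameter computation itself and seems to be included chiefly to maintain parallelism with Theorems \ref{Res1} and \ref{Res2}.
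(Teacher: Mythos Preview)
Your proposal is correct and follows precisely the scheme the paper intends: the paper does not prove Theorem \ref{Res3} explicitly but merely states that it is established ``using the same idea as in the proof of Theorem \ref{Res1} and Theorem \ref{Res2},'' and your diameter estimate $\delta(C_n)\leq (2+2\alpha)/\bigl(rn(1-2\alpha)\bigr)$ coincides with the $b=1$ specialization of the bound recorded later in the key lemma for Theorem \ref{Resf3}. Your closing observation that the supplementary hypothesis $2r\alpha<1$ is not actually consumed by the argument is also accurate.
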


\begin{theorem}\label{Res4}
Let $(X,d)$ be a bicomplete quasi-pseudometric space. Assume $J:X\to X$ is a continuous single-valued map such that $rd(x,y)\leq d(Jx,Jy)$ for some constant $r>0$ and $ \text{ for each } x,y\in X$.  Let $F:X\to CB(X)$ be a set-valued map that satisfies
\begin{equation}
H(Fx,Fy) \leq \alpha d(Jx,Jy ) + L d(Fx,Jy), \ \text{ \ for each } x,y\in X,
\end{equation}
where $\alpha>0$ and $L\geq 0$ such that $r(\alpha+L)<1$. Then there exists a unique $x_0\in X$ which is both a startpoint and an endpoint of $J$ and $F$ if and only if $J$ and $F$ have the approximate mix-point property.
\end{theorem}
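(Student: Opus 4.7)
The plan is to copy the Cantor-intersection template of Theorems \ref{Res1} and \ref{Res2}: define
\[
C_n := \left\{ x \in X : \sup_{y \in Fx} d^s(Jx, y) \leq \frac{1}{n} \right\}, \qquad n \geq 1,
\]
show that $(C_n)$ is a decreasing sequence of non-empty $\tau(d^s)$-closed sets with $\delta(C_n)\to 0$, and conclude via the Cantor intersection theorem in the complete metric space $(X, d^s)$. The implication ``startpoint-endpoint $\Rightarrow$ approximate mix-point property'' is immediate. Under the converse hypothesis, each $C_n$ is non-empty by definition of the approximate mix-point property, $C_{n+1}\subseteq C_n$ is obvious, and $\tau(d^s)$-closedness follows, exactly as before, from lower semicontinuity of $x \mapsto \sup_{y \in Fx} d^s(Jx, y)$.

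The only new ingredient is the diameter estimate, which I would carry out as follows. Fix $x, y \in C_n$. The triangle inequality for $H$ and the contraction hypothesis give
\begin{align*}
d(Jx, Jy) &\leq H(\{Jx\}, Fx) + H(Fx, Fy) + H(Fy, \{Jy\}) \\
          &\leq \frac{2}{n} + \alpha\, d(Jx, Jy) + L\, d(Fx, Jy).
\end{align*}
To handle the extra term $L\, d(Fx, Jy)$, I would observe that
\[
d(Fx, Jy) = \inf_{a \in Fx} d(a, Jy) \leq d(Fx, Jx) + d(Jx, Jy)
\]
by the scalar triangle inequality applied inside the infimum, and that the membership $x \in C_n$ forces $d(Fx, Jx) \leq \sup_{a \in Fx} d(a, Jx) \leq 1/n$. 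Substituting and rearranging yields
\[
(1 - \alpha - L)\, d(Jx, Jy) \leq \frac{2 + L}{n},
\]
and the symmetric version of the hypothesis (swapping the roles of $x$ and $y$) bounds $d(Jy, Jx)$ in the same way. Using the inequality $r\, d^s(x, y) \leq d^s(Jx, Jy)$, which is a direct consequence of $r\, d(x,y)\leq d(Jx,Jy)$ applied in both orderings, I would then obtain
\[
\delta(C_n) \leq \frac{2 + L}{r\, n\, (1 - \alpha - L)},
\]
provided the denominator is positive, which is the role of the requirement $r(\alpha + L) < 1$.

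With $\delta(C_n) \to 0$ in hand, the Cantor intersection theorem applied in the complete metric space $(X, d^s)$ gives $\bigcap_n C_n = \{x_0\}$. Since $x_0 \in C_n$ for every $n$, we have $\sup_{y \in Fx_0} d^s(Jx_0, y) = 0$, i.e.\ $H(\{Jx_0\}, Fx_0) = 0 = H(Fx_0, \{Jx_0\})$, so $x_0$ is both a startpoint and an endpoint of $J$ and $F$; uniqueness is automatic because any such point must lie in every $C_n$. The main obstacle, relative to Theorems \ref{Res1} and \ref{Res2}, is the extra summand $L\, d(Fx, Jy)$, whose control requires the auxiliary triangle-inequality step above and is responsible for the stricter coefficient $(1 - \alpha - L)$ appearing in the final estimate.
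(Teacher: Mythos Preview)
Your argument is exactly the approach the paper has in mind: the paper does not spell out a proof of this theorem but simply refers back to the Cantor-intersection template of Theorems \ref{Res1} and \ref{Res2}, and in the concluding remarks (the $b=1$ case of the lemma following Theorem \ref{Resf4}) one finds precisely your diameter bound $\delta(C_n)\le \dfrac{2+L}{rn(1-\alpha-L)}$. One caveat worth flagging: the stated hypothesis $r(\alpha+L)<1$ does not by itself force $\alpha+L<1$ (take $r$ small), so your sentence ``which is the role of the requirement $r(\alpha+L)<1$'' is not quite right; this defect is shared by the paper's own formulation, and what is genuinely needed for the rearrangement step is $\alpha+L<1$.
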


\section{Concluding remarks}

All the above results remain true if instead we consider a quasi-pseudometric type space $(X,d,b)$ (see \cite{eniola}). On the other side, the sets $C_n$ considered in the investigation can be made more general in the sense that we could consider the net $\{C_\varepsilon\}_{\varepsilon>0}$ where the $C_\varepsilon$ are sets of the form:
\[
C_\varepsilon= \left\lbrace x \in X: \underset{y\in Fx}{\sup} d^s(Jx,y)\leq \varepsilon \right\rbrace, \quad \text{ for any } \varepsilon>0.
\]


Therefore, the Theorem \ref{Res1} could be reformulated as follows:

\begin{theorem}\label{Resf1}
Let $(X,d,b)$ be a bicomplete quasi-pseudometric type space. Assume $J:X\to X$ is a continuous single-valued map such that $rd(x,y)\leq d(Jx,Jy)$ for some constant $r>0$ and $ \text{ for each } x,y\in X$. Let $F:X\to CB(X)$ be a set-valued map that satisfies
\begin{equation}
H(Fx,Fy) \leq \alpha d(Jx,Jy), \ \text{ \ for each } x,y\in X,
\end{equation}
where $\alpha\in (0,1)$ such that $r\alpha b^2<1$. Then there exists a unique $x_0\in X$ which is both a startpoint and an endpoint of $J$ and $F$ if and only if $J$ and $F$ have the approximate mix-point property.
\end{theorem}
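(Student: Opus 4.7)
The plan is to follow the blueprint of Theorem \ref{Res1} essentially verbatim, replacing the net $\{C_n\}_{n\in\mathbb{N}}$ by the net $\{C_\varepsilon\}_{\varepsilon>0}$ as advertised in the concluding remarks, and accounting for the relaxed triangle inequality $d(x,z)\leq b[d(x,y)+d(y,z)]$ that characterises a quasi-pseudometric type space. The trivial direction is immediate: if $x_0$ is simultaneously a startpoint and an endpoint for $J$ and $F$, then $\sup_{y\in Fx_0}d^s(Jx_0,y)=0$, so $\inf_x\sup_{y\in Fx}d^s(Jx,y)=0$.

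For the non-trivial direction, I would assume the approximate mix-point property and, for every $\varepsilon>0$, introduce
\[
C_\varepsilon = \left\{ x \in X : \sup_{y\in Fx} d^s(Jx,y)\leq \varepsilon \right\}.
\]
Non-emptiness of each $C_\varepsilon$ follows from the assumed approximate mix-point property. Since the map $x\mapsto \sup_{y\in Fx}d^s(Jx,y)$ is a supremum of $\tau(d^s)$-continuous mappings (here $J$ is continuous), it is $\tau(d^s)$-lower semicontinuous, hence each $C_\varepsilon$ is $\tau(d^s)$-closed. Nestedness $C_{\varepsilon'}\subseteq C_\varepsilon$ for $\varepsilon'\leq \varepsilon$ is immediate.

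The main point where the $b$-weakened triangle inequality enters is the diameter estimate for $C_\varepsilon$. First I would establish, once and for all, that $H$ inherits the $b$-triangle inequality, namely $H(A,C)\leq b[H(A,B)+H(B,C)]$ for any $A,B,C\in\mathscr{P}_0(X)$; this is the standard $\inf$/$\sup$ manipulation and is the only genuinely new computation. Applying this twice to $d(Jx,Jy)=H(\{Jx\},\{Jy\})$ with insertion of $Fx$ and $Fy$, then using $H(\{Jx\},Fx),H(Fy,\{Jy\})\leq \varepsilon$ for $x,y\in C_\varepsilon$ and the contractive hypothesis $H(Fx,Fy)\leq \alpha d(Jx,Jy)$, yields
\[
(1-b^{2}\alpha)\, d(Jx,Jy) \leq (b+b^{2})\,\varepsilon.
\]
Combined with $rd(x,y)\leq d(Jx,Jy)$, this gives $\delta(C_\varepsilon)\leq \frac{(b+b^{2})\varepsilon}{r(1-b^{2}\alpha)}\to 0$ as $\varepsilon\to 0$, where the smallness of $r\alpha b^{2}$ guarantees the denominator is positive.

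Once the diameters tend to zero, the Cantor intersection theorem (applicable because $(X,d^s)$ is complete and each $C_\varepsilon$ is non-empty $\tau(d^s)$-closed with $d^s$-diameter tending to zero, since $d^s\geq d$ up to the $b$-constant) forces $\bigcap_{\varepsilon>0}C_\varepsilon$ to be a singleton $\{x_0\}$. Then $\sup_{y\in Fx_0}d^s(Jx_0,y)=0$ gives $H(\{Jx_0\},Fx_0)=0=H(Fx_0,\{Jx_0\})$, and uniqueness is handled exactly as in Theorem \ref{Res1}: any other startpoint-and-endpoint $z_0$ lies in every $C_\varepsilon$ and hence coincides with $x_0$. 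The main obstacle I expect is the careful bookkeeping of the constant $b$ in the triangle inequality for $H$ and the resulting diameter bound, as this is precisely the step where the proof diverges from the quasi-pseudometric case.
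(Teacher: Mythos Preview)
Your proposal is correct and follows essentially the same route the paper sketches: the paper does not write out a full proof of Theorem~\ref{Resf1} but reduces it to the key lemma bounding $\delta(C_\varepsilon)$, and your computation reproduces exactly that bound $\delta(C_\varepsilon)\le \dfrac{b(1+b)\varepsilon}{r(1-b^{2}\alpha)}$ (the paper's printed denominator $1-\alpha^{2}b$ appears to be a typo for $1-\alpha b^{2}$, consistent with the hypothesis $r\alpha b^{2}<1$). One small caveat: the positivity of $1-b^{2}\alpha$ requires $b^{2}\alpha<1$, which is not literally the stated hypothesis $r\alpha b^{2}<1$ when $r<1$; this is an imprecision inherited from the paper rather than a flaw in your argument.
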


In proving this Theorem \ref{Resf1}, the following lemma is key:

\begin{lemma}

Let $(X,d,b)$ be a bicomplete quasi-pseudometric type space. Assume $J:X\to X$ is a continuous single-valued map such that $rd(x,y)\leq d(Jx,Jy)$ for some constant $r>0$ such that $r\alpha b^2<1$ and $ \text{ for each } x,y\in X$. Let $F:X\to CB(X)$ be a set-valued map that satisfies
\begin{equation}
H(Fx,Fy) \leq \alpha d(Jx,Jy), \ \text{ \ for each } x,y\in X,
\end{equation}
where $\alpha\in (0,1)$. Then

\[
\delta(C_\varepsilon) \leq \frac{b\varepsilon(1+b)}{r(1-\alpha^2b)}, \quad \text{ for any } \varepsilon>0.
\]
\end{lemma}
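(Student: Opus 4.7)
The plan is to follow the same three-term triangle-inequality argument used in the proof of Theorem \ref{Res1}, the only change being that in the $b$-relaxed setting each application of the triangle inequality now introduces a factor of $b$. Fix $\varepsilon>0$ and take arbitrary $x,y\in C_\varepsilon$; I will bound $d^{s}(x,y)$ uniformly in the choice of $x$ and $y$ and then pass to the supremum to control $\delta(C_\varepsilon)$.

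First I would decompose
\[
d(Jx,Jy) \;=\; H(\{Jx\},\{Jy\}) \;\leq\; b\,H(\{Jx\},Fx)\;+\;b^{2}\,H(Fx,Fy)\;+\;b^{2}\,H(Fy,\{Jy\}),
\]
the two factors of $b^{2}$ arising from inserting $Fx$ first (costing one $b$) and then splitting $H(Fx,\{Jy\})$ through $Fy$ (costing another $b$). Since $x,y\in C_\varepsilon$, the outer two terms are each controlled by $\varepsilon$, and the contraction hypothesis reduces the middle term to $H(Fx,Fy)\leq \alpha d(Jx,Jy)$. Collecting yields
\[
d(Jx,Jy) \;\leq\; b\varepsilon(1+b) \;+\; \alpha b^{2}\,d(Jx,Jy),
\]
which I would solve for $d(Jx,Jy)$ under the smallness assumption on $\alpha b^{2}$ implicit in $r\alpha b^{2}<1$, obtaining a bound of the form $\tfrac{b\varepsilon(1+b)}{1-\alpha b^{2}}$.

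Repeating the computation with the roles of $x$ and $y$ swapped produces the identical bound for $d(Jy,Jx)$, and hence a bound for $d^{s}(Jx,Jy)$. The lower bound $rd(x,y)\leq d(Jx,Jy)$ together with its symmetric counterpart $rd(y,x)\leq d(Jy,Jx)$ then transfers the estimate to $d^{s}(x,y)$, and taking the supremum over $x,y\in C_\varepsilon$ produces the desired bound on $\delta(C_\varepsilon)$.

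The only real obstacle I anticipate is bookkeeping the exact constant. The natural two-step splitting just described puts $\alpha b^{2}$ in the denominator, whereas the statement as written carries $\alpha^{2}b$; if this is not a typographical slip for $\alpha b^{2}$, one would need a finer decomposition, for example iterating the contraction $H(Fx,Fy)\leq \alpha d(Jx,Jy)$ through an auxiliary image of $J$ before invoking the $b$-triangle inequality, so as to trade one factor of $b$ for an extra factor of $\alpha$. In any case the qualitative behaviour $\delta(C_\varepsilon)=O(\varepsilon)$, which is what the Cantor intersection step in Theorem~\ref{Resf1} actually requires, is unaffected by this constant.
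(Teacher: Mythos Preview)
The paper states this lemma without proof, so there is no ``paper's own proof'' to compare against; your approach is precisely the natural $b$-relaxed analogue of the argument given for Theorem~\ref{Res1}, and it is the argument one would expect. The three-term decomposition, the use of $x,y\in C_\varepsilon$ to bound the outer terms by $\varepsilon$, the contraction hypothesis on the middle term, and the final transfer from $d(Jx,Jy)$ to $d(x,y)$ via $r\,d(x,y)\le d(Jx,Jy)$ are all correct.

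Two remarks on the constant. First, your suspicion is justified: the natural computation yields $1-\alpha b^{2}$ in the denominator, not $1-\alpha^{2}b$, and there is no evident way to trade a factor of $b$ for a factor of $\alpha$ here (iterating the contraction does not help, since $d(Jx,Jy)$ is not of the form $H(Fu,Fv)$). This is almost certainly a typographical slip in the paper. Second, be careful with the phrase ``the smallness assumption on $\alpha b^{2}$ implicit in $r\alpha b^{2}<1$'': the hypothesis $r\alpha b^{2}<1$ does \emph{not} by itself force $\alpha b^{2}<1$ when $r<1$, so to legitimately divide by $1-\alpha b^{2}$ you need $\alpha b^{2}<1$ as an explicit standing assumption (which the paper presumably intends, given that $\alpha\in(0,1)$ and $b\ge 1$). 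As you note, none of this affects the only thing Theorem~\ref{Resf1} actually needs, namely $\delta(C_\varepsilon)\to 0$ as $\varepsilon\to 0$.
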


\vspace*{1.5cm}

Similarly, the Theorem \ref{Res2} could be reformulated as follows:

\begin{theorem}\label{Resf2}
Let $(X,d,b)$ be a bicomplete quasi-pseudometric type space. Assume $J:X\to X$ is a continuous single-valued map such that $rd(x,y)\leq d(Jx,Jy)$ for some constant $r>0$ and $ \text{ for each } x,y\in X$. Let $F:X\to CB(X)$ be a set-valued map that satisfies
\begin{equation}
H(Fx,Fy) \leq \alpha [d(Jx,Fx)+d(Jy,Fy)], \ \text{ \ for each } x,y\in X,
\end{equation}
where $\alpha\in (0,1/2)$. Then there exists a unique $x_0\in X$ which is both a startpoint and an endpoint of $J$ and $F$ if and only if $J$ and $F$ have the approximate mix-point property.
\end{theorem}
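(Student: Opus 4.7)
The plan is to mimic the proof of Theorem \ref{Res2} verbatim in its skeleton (introduce the family $C_\varepsilon$, show it is non-empty, nested, $\tau(d^s)$-closed, with diameter tending to $0$, then apply Cantor's intersection theorem in the bicomplete space $(X,d^s)$), but to rework the single quantitative step where the triangle inequality is used, since in a quasi-pseudometric type space one only has $d(x,z)\leq b[d(x,y)+d(y,z)]$ for some $b\geq 1$ and, correspondingly, $H(A,C)\leq b[H(A,B)+H(B,C)]$ on $CB(X)$. As before, one direction (startpoint $+$ endpoint $\Rightarrow$ approximate mix-point property) is immediate, so all the work is in the converse.

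Assuming $J$ and $F$ have the approximate mix-point property, set
\[
C_\varepsilon=\Bigl\{x\in X:\ \sup_{y\in Fx}d^s(Jx,y)\leq \varepsilon\Bigr\},\qquad \varepsilon>0.
\]
Non-emptiness of each $C_\varepsilon$ follows from the approximate mix-point property; the inclusions $C_{\varepsilon'}\subseteq C_\varepsilon$ for $\varepsilon'\leq\varepsilon$ are clear; and the $\tau(d^s)$-closedness is inherited from the fact that $x\mapsto \sup_{y\in Fx} d^s(Jx,y)$ is $\tau(d^s)$-lower semicontinuous as a supremum of $\tau(d^s)$-continuous maps (here the continuity of $J$ is used). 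These three facts are exactly as in the proof of Theorem \ref{Res2} and need no modification.

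The main obstacle is the diameter estimate. For $x,y\in C_\varepsilon$, I would first observe that $H(\{Jx\},Fx)\leq\varepsilon$ and $H(Fy,\{Jy\})\leq\varepsilon$, together with $d(Jx,Fx)\leq\varepsilon$ and $d(Jy,Fy)\leq\varepsilon$. Applying the $b$-triangle inequality for $H$ twice one gets
\begin{align*}
d(Jx,Jy)&=H(\{Jx\},\{Jy\})\\
&\leq b\bigl[H(\{Jx\},Fx)+H(Fx,\{Jy\})\bigr]\\
&\leq b\,\varepsilon+b^2\bigl[H(Fx,Fy)+H(Fy,\{Jy\})\bigr]\\
&\leq b\,\varepsilon+b^2\alpha\bigl[d(Jx,Fx)+d(Jy,Fy)\bigr]+b^2\varepsilon\\
&\leq b\,\varepsilon(1+b)+2\alpha b^2\varepsilon=b\,\varepsilon\bigl[1+b(1+2\alpha)\bigr].
\end{align*}
Using $rd(x,y)\leq d(Jx,Jy)$ and the symmetric estimate (with $x,y$ swapped), this yields
\[
\delta(C_\varepsilon)\leq \frac{b\,\varepsilon\bigl[1+b(1+2\alpha)\bigr]}{r},
\]
which tends to $0$ as $\varepsilon\to 0$ for any $\alpha\in(0,1/2)$; no extra compatibility condition between $r$, $\alpha$ and $b$ is needed here (contrast with Theorem \ref{Resf1}, where the linear term $\alpha d(Jx,Jy)$ has to be absorbed on the left, forcing $b^2\alpha<1$).

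With the diameter estimate in hand, the Cantor intersection theorem applied to the bicomplete metric $d^s$ gives $\bigcap_{\varepsilon>0} C_\varepsilon=\{x_0\}$ for a single point $x_0$. The inclusion $x_0\in C_\varepsilon$ for every $\varepsilon>0$ forces $\sup_{y\in Fx_0} d(Jx_0,y)=0=\sup_{y\in Fx_0}d(y,Jx_0)$, i.e.\ $H(\{Jx_0\},Fx_0)=0=H(Fx_0,\{Jx_0\})$. Uniqueness is immediate: any startpoint-and-endpoint $z_0$ satisfies $\sup_{y\in Fz_0}d^s(Jz_0,y)=0\leq \varepsilon$ for every $\varepsilon>0$, hence $z_0\in\bigcap_\varepsilon C_\varepsilon=\{x_0\}$.
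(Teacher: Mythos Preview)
Your proposal is correct and follows exactly the route the paper intends: the paper does not write out a full proof of Theorem~\ref{Resf2} but instead records the key diameter lemma $\delta(C_\varepsilon)\le \frac{b\varepsilon}{r}(1+b+2\alpha b)$, which is precisely the bound $\frac{b\varepsilon[1+b(1+2\alpha)]}{r}$ you derive via two applications of the $b$-triangle inequality for $H$. The remaining steps (non-emptiness, nesting, $\tau(d^s)$-closedness, Cantor intersection, uniqueness) are identical to the paper's treatment of Theorem~\ref{Res2}, and your observation that no compatibility condition on $r,\alpha,b$ is needed here is also in line with the paper's statement.
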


The key lemma for Theorem \ref{Resf2} is

\begin{lemma}

Let $(X,d,b)$ be a bicomplete quasi-pseudometric type space. Assume $J:X\to X$ is a continuous single-valued map such that $rd(x,y)\leq d(Jx,Jy)$ for some constant $r>0$ and $ \text{ for each } x,y\in X$. Let $F:X\to CB(X)$ be a set-valued map that satisfies
\begin{equation}
H(Fx,Fy) \leq \alpha [d(Jx,Fx)+d(Jy,Fy)], \ \text{ \ for each } x,y\in X,
\end{equation}
where $\alpha\in (0,1/2)$. Then

\[
\delta(C_\varepsilon) \leq \frac{b\varepsilon}{r}(1+b+2\alpha b), \quad \text{ for any } \varepsilon>0.
\]
\end{lemma}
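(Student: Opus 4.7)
The plan is to reproduce the argument of Theorem~\ref{Res2} in the quasi-pseudometric type setting, where every application of the triangle inequality picks up a factor of $b \ge 1$. The aim is to show that every pair $x,y \in C_\varepsilon$ satisfies
\[
d^{s}(x,y) \le \frac{b\varepsilon}{r}(1+b+2\alpha b),
\]
which immediately yields the stated bound on $\delta(C_\varepsilon)$.

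First I would unpack membership in $C_\varepsilon$: the inequality $\sup_{u\in Fx} d^{s}(Jx,u)\le\varepsilon$ forces $d(Jx,u)\le\varepsilon$ and $d(u,Jx)\le\varepsilon$ for every $u\in Fx$. Consequently $H(\{Jx\},Fx)\le\varepsilon$, $H(Fx,\{Jx\})\le\varepsilon$, and in particular $d(Jx,Fx)\le\varepsilon$; the same statements hold at $y$.

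Next, for $x,y\in C_\varepsilon$, I would expand $d(Jx,Jy) = H(\{Jx\},\{Jy\})$ through the two intermediate sets $Fx$ and $Fy$. Iterating the $b$-relaxed triangle inequality twice gives the three-term estimate
\[
H(\{Jx\},\{Jy\}) \le b\,H(\{Jx\},Fx) + b^{2}\,H(Fx,Fy) + b^{2}\,H(Fy,\{Jy\}).
\]
Using the contractivity hypothesis $H(Fx,Fy)\le\alpha[d(Jx,Fx)+d(Jy,Fy)]\le 2\alpha\varepsilon$ together with the two $\varepsilon$-bounds from the previous step, I get
\[
d(Jx,Jy) \le b\varepsilon + 2\alpha b^{2}\varepsilon + b^{2}\varepsilon = b\varepsilon(1+b+2\alpha b).
\]
The expansivity assumption $r\,d(x,y)\le d(Jx,Jy)$ then translates this to the base space: $d(x,y)\le \frac{b\varepsilon}{r}(1+b+2\alpha b)$.

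Finally, because the contractivity hypothesis and the condition $x,y\in C_\varepsilon$ are both symmetric in $x,y$, the identical calculation with $x$ and $y$ interchanged bounds $d(y,x)$ by the same quantity. Hence $d^{s}(x,y)\le\frac{b\varepsilon}{r}(1+b+2\alpha b)$, giving the claimed bound on $\delta(C_\varepsilon)$. The main obstacle is essentially bookkeeping: one first has to confirm that the extended map $H$ on $\mathscr{P}_{0}(X)$ inherits a $b$-relaxed triangle inequality from $d$ (which follows from the pointwise inequality $d(a,C)\le b\,d(a,\beta)+b\,d(\beta,C)$ by taking an infimum over $\beta\in B$ and then a supremum over $a\in A$), and then carefully track the powers of $b$ produced by each insertion of an intermediate term so that the constant $1+b+2\alpha b$ comes out exactly as in the statement.
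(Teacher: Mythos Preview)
Your proposal is correct and follows exactly the approach the paper intends: the paper does not give a separate proof of this lemma, but presents it as the $b$-type analogue of the diameter estimate inside the proof of Theorem~\ref{Res2}, and your argument is precisely that computation with the $b$-relaxed triangle inequality inserted and the powers of $b$ tracked. Setting $b=1$ in your bound recovers the paper's $\frac{1}{rn}(2+2\alpha)$, confirming the bookkeeping is right.
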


\vspace*{1.5cm}

In a similar manner, the Theorem \ref{Res3} could be reformulated as follows:

\begin{theorem}\label{Resf3}
Let $(X,d,b)$ be a bicomplete quasi-pseudometric type space. Assume $J:X\to X$ is a continuous single-valued map such that $rd(x,y)\leq d(Jx,Jy)$ for some constant $r>0$ and $ \text{ for each } x,y\in X$. Let $F:X\to CB(X)$ be a set-valued map that satisfies
\begin{equation}
H(Fx,Fy) \leq \alpha[ d(Jx,Fy ) + d(Fx,Jy)], \ \text{ \ for each } x,y\in X,
\end{equation}
where $0<\alpha<1/2$ with $2b^2r\alpha<1$. Then there exists a unique $x_0\in X$ which is both a startpoint and an endpoint of $J$ and $F$ if and only if $J$ and $F$ have the approximate mix-point property.
\end{theorem}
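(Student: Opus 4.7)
The plan is to mirror the proof of Theorem~\ref{Res3} in the standard quasi-pseudometric setting, lifting each step to the $b$-type triangle inequality and carefully tracking how the constant $b$ propagates through the estimates. As in the other results of this section, the ``only if'' direction is immediate: if $x_0$ is both a startpoint and an endpoint of $J$ and $F$, then $\sup_{y \in Fx_0} d^s(Jx_0, y) = 0$, so $J$ and $F$ trivially enjoy the approximate mix-point property.

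For the nontrivial direction, assume $J$ and $F$ have the approximate mix-point property, and for each $\varepsilon > 0$ set
\[
C_\varepsilon := \Bigl\{ x \in X : \sup_{y \in Fx} d^s(Jx, y) \leq \varepsilon \Bigr\}.
\]
Exactly as in the proofs of Theorems~\ref{Res1} and \ref{Res2}, the family $\{C_\varepsilon\}_{\varepsilon > 0}$ is nonempty (by the mix-point hypothesis), nested ($C_{\varepsilon'} \subseteq C_\varepsilon$ whenever $\varepsilon' \leq \varepsilon$), and each $C_\varepsilon$ is $\tau(d^s)$-closed because $x \mapsto \sup_{y \in Fx} d^s(Jx, y)$ is $\tau(d^s)$-lower semicontinuous, as a supremum of $\tau(d^s)$-continuous maps built on the continuous $J$.

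The crux is the diameter estimate. For $x, y \in C_\varepsilon$, I would iterate the $b$-relaxed triangle inequality for the extended Hausdorff quasi-pseudometric, $H(A, C) \leq b\bigl[H(A, B) + H(B, C)\bigr]$, to obtain
\[
d(Jx, Jy) = H(\{Jx\}, \{Jy\}) \leq b\, H(\{Jx\}, Fx) + b^2\, H(Fx, Fy) + b^2\, H(Fy, \{Jy\}) \leq b\varepsilon(1+b) + b^2\, H(Fx, Fy),
\]
and then invoke the contraction hypothesis
\[
H(Fx, Fy) \leq \alpha\bigl[d(Jx, Fy) + d(Fx, Jy)\bigr] \leq 2\alpha b\bigl[d(Jx, Jy) + \varepsilon\bigr],
\]
where the second inequality uses $d(Jx, Fy) \leq b\, d(Jx, Jy) + b\, d(Jy, Fy) \leq b\, d(Jx, Jy) + b\varepsilon$ together with its symmetric counterpart for $d(Fx, Jy)$. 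Solving the resulting linear inequality for $d(Jx, Jy)$ yields a bound $d(Jx, Jy) \leq \kappa(\alpha, b)\, \varepsilon$ with $\kappa(\alpha, b) > 0$ finite under the smallness condition dictated by $2 b^2 r\alpha < 1$; combined with the lower Lipschitz estimate $r\, d(x, y) \leq d(Jx, Jy)$ this produces $\delta(C_\varepsilon) \leq \kappa(\alpha,b)\,\varepsilon / r \to 0$ as $\varepsilon \to 0$.

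The conclusion then follows the now-familiar pattern: by Cantor's intersection theorem applied in the complete metric space $(X, d^s)$ to the nested family of nonempty, $\tau(d^s)$-closed, diameter-shrinking sets $\{C_\varepsilon\}_{\varepsilon>0}$, one obtains $\bigcap_{\varepsilon > 0} C_\varepsilon = \{x_0\}$, and by construction $\sup_{y \in Fx_0} d^s(Jx_0, y) = 0$, forcing $x_0$ to be simultaneously a startpoint and an endpoint of $J$ and $F$; uniqueness is immediate, since any such point would lie in every $C_\varepsilon$. The main obstacle will be the careful bookkeeping of the powers of $b$ that emerge from the relaxed triangle inequality so that, after solving the linear inequality for $d(Jx, Jy)$, the relevant denominator stays positive exactly under the stated hypothesis $2 b^2 r\alpha < 1$.
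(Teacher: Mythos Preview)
Your approach is exactly the paper's: it does not give a self-contained proof of Theorem~\ref{Resf3} but records the key diameter lemma
\[
\delta(C_\varepsilon)\le \frac{b\varepsilon}{r(1-2b^{2}\alpha)}\,(1+b+2\alpha b),
\]
and then appeals to the Cantor intersection argument already carried out for Theorems~\ref{Res1} and~\ref{Res2}. Your outline (nonempty, nested, $\tau(d^{s})$-closed $C_\varepsilon$; diameter estimate via the $b$-relaxed triangle inequality for $H$; Cantor; uniqueness) matches this template step for step.

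One bookkeeping point deserves correction. Your displayed chain gives the coefficient $2\alpha b^{3}$ in front of $d(Jx,Jy)$ (from $b^{2}$ on $H(Fx,Fy)$ times the extra $b$ coming from $d(Jx,Fy)\le b\,d(Jx,Jy)+b\varepsilon$), so the denominator you must keep positive is $1-2\alpha b^{3}$, not something involving $r$. The parameter $r$ enters only at the very end, when you divide $d(Jx,Jy)$ by $r$ to bound $d(x,y)$; it has no business inside the solvability condition for the linear inequality. In the paper's own lemma the denominator is $1-2b^{2}\alpha$ (also $r$-free), so the hypothesis ``$2b^{2}r\alpha<1$'' in the theorem statement appears to be a misprint for ``$2b^{2}\alpha<1$'' (compare the analogous mismatch in Theorem~\ref{Resf1} and its lemma). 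Your final sentence asserting that the denominator is positive ``exactly under the stated hypothesis $2b^{2}r\alpha<1$'' should therefore be revised: state the $r$-free condition you actually need, and note that $r$ only scales the final diameter bound.
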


The key lemma for Theorem \ref{Resf3} is therefore:

\begin{lemma}
Let $(X,d,b)$ be a bicomplete quasi-pseudometric type space. Assume $J:X\to X$ is a continuous single-valued map such that $rd(x,y)\leq d(Jx,Jy)$ for some constant $r>0$ and $ \text{ for each } x,y\in X$. Let $F:X\to CB(X)$ be a set-valued map that satisfies
\begin{equation}
H(Fx,Fy) \leq \alpha[ d(Jx,Fy ) + d(Fx,Jy)], \ \text{ \ for each } x,y\in X,
\end{equation}
where $0<\alpha<1/2$ with $2b^2r\alpha<1$. Then

\[
\delta(C_\varepsilon) \leq \frac{b\varepsilon}{r(1-2b^2\alpha)}(1+b+2\alpha b), \quad \text{ for any } \varepsilon>0.
\]
\end{lemma}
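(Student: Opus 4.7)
The plan is to mimic the proofs of Theorems \ref{Res1} and \ref{Res2}, with careful bookkeeping of the relaxed triangle inequality of the quasi-pseudometric type space (which introduces a factor $b$ per application).

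First I would fix $x,y\in C_\varepsilon$. Since $d^{s}=d\vee d^{-1}$, membership $x\in C_\varepsilon$ yields $\sup_{z\in Fx}d(Jx,z)\le \varepsilon$ and $\sup_{z\in Fx}d(z,Jx)\le \varepsilon$, hence $H(\{Jx\},Fx)\le \varepsilon$ and $H(Fx,\{Jx\})\le \varepsilon$, and similarly with $x$ replaced by $y$. Chaining the $b$-triangle inequality through a point of $Fx$ and a point of $Fy$, I expect to obtain the decomposition
\[
d(Jx,Jy)\ \le\ b\,H(\{Jx\},Fx)+b^{2}H(Fx,Fy)+b^{2}H(Fy,\{Jy\})\ \le\ b\varepsilon(1+b)+b^{2}H(Fx,Fy).
\]

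Next, the hypothesis $H(Fx,Fy)\le \alpha[d(Jx,Fy)+d(Fx,Jy)]$ is exploited, and to re-expose $d(Jx,Jy)$ I would insert $Jy$ as intermediate point for the first summand and $Jx$ for the second: $d(Jx,Fy)\le b[d(Jx,Jy)+d(Jy,Fy)]\le b[d(Jx,Jy)+\varepsilon]$, and symmetrically $d(Fx,Jy)\le b[\varepsilon+d(Jx,Jy)]$. Summing gives $H(Fx,Fy)\le 2\alpha b[\varepsilon+d(Jx,Jy)]$.

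Substituting this back into the earlier decomposition produces a linear inequality of the form $(1-c_{1}\alpha)\,d(Jx,Jy)\le c_{2}\varepsilon$ with $c_{1},c_{2}$ explicit polynomials in $b$. The assumption $2b^{2}\alpha<1$ is precisely what keeps the left-hand coefficient positive, so one can divide and obtain a bound of the stated shape. Applying $rd(x,y)\le d(Jx,Jy)$ and passing to the supremum over $x,y\in C_\varepsilon$ then yields the claimed estimate on $\delta(C_\varepsilon)$.

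The main obstacle is not conceptual but a careful accounting of the factors of $b$: each application of the relaxed triangle inequality injects a multiplicative $b$, so the exponents of $b$ appearing in the numerator and denominator depend sensitively on the routing of the intermediate points. The cleanest routing---through $Fx$ then $Fy$ in the decomposition, and through $Jy$ (resp.\ $Jx$) when estimating $d(Jx,Fy)$ (resp.\ $d(Fx,Jy)$)---is what is needed to recover the stated constants; a suboptimal choice would inflate an exponent of $b$ and could spoil the positivity of the denominator.
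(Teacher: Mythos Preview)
The paper states this lemma without proof, so there is no argument to compare against directly; your approach is exactly the natural extension of the proofs of Theorems~\ref{Res1} and~\ref{Res2} to the type-space setting, and it is the method the paper clearly has in mind.

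One point deserves flagging, however. If you carry out your own routing literally, you do \emph{not} recover the constants printed in the lemma. From
\[
d(Jx,Jy)\le b\varepsilon(1+b)+b^{2}H(Fx,Fy)
\]
together with your estimate $H(Fx,Fy)\le 2\alpha b[\varepsilon+d(Jx,Jy)]$ (each of $d(Jx,Fy)$ and $d(Fx,Jy)$ costs one application of the $b$-triangle inequality), substitution gives
\[
(1-2\alpha b^{3})\,d(Jx,Jy)\le b\varepsilon\,(1+b+2\alpha b^{2}),
\]
hence
\[
\delta(C_\varepsilon)\le \frac{b\varepsilon(1+b+2\alpha b^{2})}{r(1-2\alpha b^{3})}.
\]
The denominator carries $b^{3}$, not $b^{2}$, and the positivity condition needed is $2\alpha b^{3}<1$ (note that $r$ plays no role in this step, so the printed hypothesis $2b^{2}r\alpha<1$ is already suspicious). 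I do not see a routing that avoids the extra factor of $b$: bounding $d(Jx,Fy)$ in terms of $d(Jx,Jy)$ requires one use of the relaxed triangle inequality, and that $b$ then gets multiplied by the $b^{2}$ already sitting in front of $H(Fx,Fy)$. So your method is sound, but be prepared for the explicit constants you obtain to differ from those stated; the discrepancy appears to be a typographical issue in the paper rather than a defect in your argument.
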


Finally the Theorem \ref{Res4} could be reformulated as follows:

\begin{theorem}\label{Resf4}
Let $(X,d,b)$ be a bicomplete quasi-pseudometric type space. Assume $J:X\to X$ is a continuous single-valued map such that $rd(x,y)\leq d(Jx,Jy)$ for some constant $r>0$and $ \text{ for each } x,y\in X$. Let $F:X\to CB(X)$ be a set-valued map that satisfies
\begin{equation}
H(Fx,Fy) \leq \alpha d(Jx,Jy ) + L d(Fx,Jy), \ \text{ \ for each } x,y\in X,
\end{equation}
where $\alpha>0$ and $L\geq 0$ such that $rb^2(\alpha+bL)<1$. Then there exists a unique $x_0\in X$ which is both a startpoint and an endpoint of $J$ and $F$ if and only if $J$ and $F$ have the approximate mix-point property.
\end{theorem}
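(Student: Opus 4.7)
The plan is to mirror the proofs of Theorems~\ref{Res1}--\ref{Res4}, with the extra factor $b$ from the relaxed triangle inequality threaded through each application. The ``only if'' direction is trivial: a common startpoint and endpoint gives both the approximate startpoint and endpoint properties, hence the approximate mix-point property. I concentrate on the converse.

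Assuming $J$ and $F$ have the approximate mix-point property, I would introduce the family
\[
C_\varepsilon = \left\{ x \in X : \sup_{y \in Fx} d^s(Jx, y) \leq \varepsilon \right\}, \qquad \varepsilon > 0,
\]
and verify the three by-now-standard preliminary facts: $C_\varepsilon \neq \emptyset$ for every $\varepsilon > 0$ (directly from the approximate mix-point property); $C_{\varepsilon'} \subseteq C_\varepsilon$ when $\varepsilon' \leq \varepsilon$; and each $C_\varepsilon$ is $\tau(d^s)$-closed, since $x \mapsto \sup_{y \in Fx} d^s(Jx, y)$ is $\tau(d^s)$-lower semicontinuous as a supremum of $\tau(d^s)$-continuous maps (the continuity of $J$ entering here).

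The crux of the argument is the diameter bound. For $x, y \in C_\varepsilon$, I would apply the $b$-relaxed triangle inequality for $H$ twice to obtain
\[
d(Jx, Jy) \leq b\, H(\{Jx\}, Fx) + b^2 H(Fx, Fy) + b^2 H(Fy, \{Jy\}) \leq (b + b^2)\varepsilon + b^2 H(Fx, Fy),
\]
and bound $d(Fx, Jy) \leq b[d(Fx, Jx) + d(Jx, Jy)] \leq b\varepsilon + b\, d(Jx, Jy)$, so that the contractive hypothesis yields
\[
H(Fx, Fy) \leq (\alpha + bL)\, d(Jx, Jy) + bL\varepsilon.
\]
Substituting and collecting the $d(Jx, Jy)$ terms produces a coefficient $b^2(\alpha + bL)$ on the right-hand side; the hypothesis $rb^2(\alpha + bL) < 1$ combined with the expansiveness $r\, d(x,y) \leq d(Jx, Jy)$ is then what allows the rearrangement to give a bound $d^s(x,y) \leq K\varepsilon$ for a constant $K$ depending only on $b, \alpha, L, r$, so $\delta(C_\varepsilon) \to 0$ as $\varepsilon \to 0$.

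The conclusion then follows from the Cantor intersection theorem on the complete metric space $(X, d^s)$: $\bigcap_{\varepsilon > 0} C_\varepsilon = \{x_0\}$, and this $x_0$ satisfies $H(\{Jx_0\}, Fx_0) = 0 = H(Fx_0, \{Jx_0\})$, so it is simultaneously a startpoint and an endpoint of $J$ and $F$. Uniqueness is automatic, since any such point lies in every $C_\varepsilon$ and hence in the intersection. The main obstacle I expect is the careful bookkeeping of the factors of $b$ across the double application of the relaxed triangle inequality and the bounding of $d(Fx, Jy)$: one must make sure that the power of $b$ appearing in the decisive coefficient is exactly $b^2$ on the $\alpha$ term and $b^3$ on the $L$ term (i.e.\ $b^2(\alpha + bL)$), so that the stated hypothesis $rb^2(\alpha + bL) < 1$ is precisely what is needed, and that the factor $r$ enters the estimate only once, at the final step where one passes from a bound on $d(Jx,Jy)$ to a bound on $d^s(x,y)$.
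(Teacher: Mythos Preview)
Your proposal is correct and follows essentially the same approach as the paper. The paper does not write out the proof either, but reduces it to a key lemma asserting the diameter bound
\[
\delta(C_\varepsilon) \leq \frac{b\varepsilon(1+b+Lb^2)}{r\bigl(1-b^2(\alpha+bL)\bigr)},
\]
which is exactly what your chain of inequalities produces: your double application of the $b$-relaxed triangle inequality gives $d(Jx,Jy)\leq (b+b^2)\varepsilon + b^2 H(Fx,Fy)$, your estimate $d(Fx,Jy)\leq b\varepsilon + b\,d(Jx,Jy)$ feeds into the contractive hypothesis, and solving for $d(Jx,Jy)$ followed by the expansiveness $r\,d(x,y)\leq d(Jx,Jy)$ yields precisely this constant. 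Your remark that the factor $r$ enters only at the very last step, when passing from $d(Jx,Jy)$ to $d(x,y)$, is exactly right and matches the placement of $r$ in the paper's formula.
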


The proof of Theorem \ref{Resf4} will be done with the use of the following lemma:

\begin{lemma}

Let $(X,d,b)$ be a bicomplete quasi-pseudometric type space. Assume $J:X\to X$ is a continuous single-valued map such that $rd(x,y)\leq d(Jx,Jy)$ for some constant $r>0$ and $ \text{ for each } x,y\in X$. Let $F:X\to CB(X)$ be a set-valued map that satisfies
\begin{equation}
H(Fx,Fy) \leq \alpha d(Jx,Jy ) + L d(Fx,Jy), \ \text{ \ for each } x,y\in X,
\end{equation}
where $\alpha>0$ and $L\geq 0$ such that $rb^2(\alpha+bL)<1$. Then

\[
\delta(C_\varepsilon) \leq \frac{b\varepsilon(1+b+Lb^2)}{r(1-b^2(\alpha+bL))}, \quad \text{ for any } \varepsilon>0.
\]
\end{lemma}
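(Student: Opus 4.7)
The plan is to mirror the diameter estimates used in the proofs of Theorems \ref{Res1} and \ref{Res2}, adapted to the $b$-relaxed triangle inequality of a quasi-pseudometric type space and to the extra Berinde-style term $L\,d(Fx,Jy)$. Fix $\varepsilon>0$ and $x,y\in C_\varepsilon$. Applying the $b$-triangle inequality for $H$ twice, first through $Fx$ and then through $Fy$, I obtain
\[
d(Jx,Jy)=H(\{Jx\},\{Jy\})\leq b\,H(\{Jx\},Fx)+b^{2}\,H(Fx,Fy)+b^{2}\,H(Fy,\{Jy\}).
\]
Using the identity $H(\{Jx\},Fx)=\sup_{a\in Fx}d(Jx,a)$ together with $d\leq d^{s}$, the definition of $C_\varepsilon$ gives $H(\{Jx\},Fx)\leq\varepsilon$ and analogously $H(Fy,\{Jy\})\leq\varepsilon$.

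Next, I insert the contractive hypothesis $H(Fx,Fy)\leq\alpha\,d(Jx,Jy)+L\,d(Fx,Jy)$. The only genuinely new ingredient compared with Theorem \ref{Res1} is controlling the non-symmetric term $d(Fx,Jy)$. For each $a\in Fx$, the $b$-triangle inequality yields $d(a,Jy)\leq b\,d(a,Jx)+b\,d(Jx,Jy)$; taking the infimum over $a\in Fx$ and using $d(Fx,Jx)\leq\sup_{a\in Fx}d(a,Jx)\leq\varepsilon$ produces the key auxiliary bound $d(Fx,Jy)\leq b\varepsilon+b\,d(Jx,Jy)$.

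Substituting these estimates into the displayed inequality and collecting powers of $b$ leads to
\[
d(Jx,Jy)\leq b\varepsilon(1+b+Lb^{2})+b^{2}(\alpha+bL)\,d(Jx,Jy).
\]
Since the factor $1-b^{2}(\alpha+bL)$ is positive, I can solve for $d(Jx,Jy)$. Finally, the expansion hypothesis $r\,d(x,y)\leq d(Jx,Jy)$ transfers the bound to $d(x,y)$, and taking the supremum over $x,y\in C_\varepsilon$ yields the announced estimate.

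The main obstacle is not conceptual but combinatorial: the careful book-keeping of the $b$-factors. One must place the two applications of the $b$-triangle inequality correctly (one $b$ for the outer step, a further $b^{2}$ for the inner step through $Fx$ and $Fy$), verify that the auxiliary estimate for $d(Fx,Jy)$ costs only one extra $b$ rather than $b^{2}$, and track the interplay between $\alpha$, $L$, and $b$ when rearranging so that the coefficient of $d(Jx,Jy)$ on the right-hand side comes out to exactly $b^{2}(\alpha+bL)$ and the absolute term to exactly $b\varepsilon(1+b+Lb^{2})$. All remaining steps are elementary algebra.
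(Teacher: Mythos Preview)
Your proposal is correct and follows exactly the pattern the paper indicates (the paper does not give an explicit proof of this lemma, merely pointing to the template of Theorems~\ref{Res1} and~\ref{Res2}); the two-step application of the $b$-triangle inequality for $H$, the auxiliary bound $d(Fx,Jy)\le b\varepsilon+b\,d(Jx,Jy)$, and the final algebra all check out and reproduce the stated constant precisely. One small caveat: your line ``Since the factor $1-b^{2}(\alpha+bL)$ is positive'' relies on $b^{2}(\alpha+bL)<1$, whereas the stated hypothesis is $rb^{2}(\alpha+bL)<1$; this is an inconsistency already present in the paper's formulation (the claimed bound itself presupposes the positivity of that factor), not a defect of your argument.
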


{\bf Conflict of interest.}

The author declares that there is no conflict
of interests regarding the publication of this article.

\bibliographystyle{amsplain}

\end{document}